%%%%%%%%%%%%%%%%%%%%%%% file template.tex %%%%%%%%%%%%%%%%%%%%%%%%%
%
% This is a general template file for the LaTeX package SVJour3
% for Springer journals.          Springer Heidelberg 2010/09/16
%
% Copy it to a new file with a new name and use it as the basis
% for your article. Delete % signs as needed.
%
% This template includes a few options for different layouts and
% content for various journals. Please consult a previous issue of
% your journal as needed.
%
%%%%%%%%%%%%%%%%%%%%%%%%%%%%%%%%%%%%%%%%%%%%%%%%%%%%%%%%%%%%%%%%%%%
%
% First comes an example EPS file -- just ignore it and
% proceed on the \documentclass line
% your LaTeX will extract the file if required
% [arxiv_v2: filecontents example.eps stripped, 188 chars]
\RequirePackage{fix-cm}
\documentclass[smallextended]{svjour3}       % onecolumn (second format)
\smartqed  % flush right qed marks, e.g. at end of proof
\usepackage{graphicx}
%
% \usepackage{mathptmx}      % use Times fonts if available on your TeX system
%
% insert here the call for the packages your document requires
%\usepackage{latexsym}
% etc.
%
\usepackage{latexsym}
\usepackage{amsmath}
\usepackage{amsfonts}
\usepackage{amssymb}
\usepackage{algorithm}
\usepackage[noend]{algpseudocode}
\usepackage{cite}
\usepackage{enumitem}
\usepackage{float}
\usepackage{mathtools}
\usepackage{tabularx}
\RequirePackage[colorlinks,citecolor=blue,urlcolor=blue]{hyperref}
%
% please place your own definitions here and don't use \def but
% \newcommand{}{}
%
\newcommand{\abs}[1]{\ensuremath{\left|#1\right|}}

\newcommand{\norm}[1]{\ensuremath{\left\|#1\right\|}}

\newtheorem{assumption}{Assumption}

\DeclareMathOperator{\Tr}{Tr}

%
% Insert the name of "your journal" with
% \journalname{myjournal}
%
\journalname{}
\begin{document}

\title{Secant Penalized BFGS: A Noise Robust Quasi-Newton Method Via Penalizing The Secant Condition%\thanks{Grants or other notes
%about the article that should go on the front page should be
%placed here. General acknowledgments should be placed at the end of the article.}
}
%\subtitle{Do you have a subtitle?\\ If so, write it here}

\titlerunning{Secant Penalized BFGS}        % if too long for running head

\author{Brian Irwin*\thanks{* Corresponding author.} \and Eldad Haber}

\authorrunning{Brian Irwin \and Eldad Haber} % if too long for running head

\institute{{Brian Irwin \href{https://orcid.org/0000-0002-6086-4359}{\includegraphics[scale=0.4]{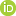}} \and Eldad Haber}
	      \at
              Department of Earth, Ocean and Atmospheric Sciences, The University of British Columbia, Vancouver, BC, Canada \\
              \email{\{birwin, haber\}@eoas.ubc.ca}
}

\date{Received: date / Accepted: date}
% The correct dates will be entered by the editor

\maketitle

\begin{abstract}
In this paper, we introduce a new variant of the BFGS method designed to perform well when gradient measurements are corrupted by noise. We show that by treating the secant condition with a penalty method approach motivated by regularized least squares estimation, one can smoothly interpolate between updating the inverse Hessian approximation with the original BFGS update formula and not updating the inverse Hessian approximation. Furthermore, we find the curvature condition is smoothly relaxed as the interpolation moves towards not updating the inverse Hessian approximation, disappearing entirely when the inverse Hessian approximation is not updated. These developments allow us to develop a method we refer to as secant penalized BFGS (SP-BFGS) that allows one to relax the secant condition based on the amount of noise in the gradient measurements. SP-BFGS provides a means of incrementally updating the new inverse Hessian approximation with a controlled amount of bias towards the previous inverse Hessian approximation, which allows one to replace the overwriting nature of the original BFGS update with an averaging nature that resists the destructive effects of noise and can cope with negative curvature measurements. We discuss the theoretical properties of SP-BFGS, including convergence when minimizing strongly convex functions in the presence of uniformly bounded noise. Finally, we present extensive numerical experiments using over 30 problems from the CUTEst test problem set that demonstrate the superior performance of SP-BFGS compared to BFGS in the presence of both noisy function and gradient evaluations.
\keywords{Quasi-Newton Methods \and Secant Condition \and Penalty Methods \and Least Squares Estimation \and Measurement Error \and Noise Robust Optimization}
% \PACS{PACS code1 \and PACS code2 \and more}
% \subclass{MSC code1 \and MSC code2 \and more}
\end{abstract}

%==========================
\section{Introduction}
\label{sec:intro}
%==========================
Over the past 50 years, quasi-Newton methods have proved to be some of the most economical and effective methods for a variety of optimization problems. Originally conceived to provide some of the advantages of second order methods without the full cost of Newton's method, quasi-Newton methods, which are also referred to as variable metric methods \cite{Johnson2019_quasiNewton_notes}, are based on the observation that by differencing observed gradients, one can calculate approximate curvature information.  This approximate curvature information can then be used to improve the speed of convergence, especially in comparison to first order methods, such as gradient descent. There are currently a variety of different quasi-Newton methods, with the Broyden-Fletcher-Goldfarb-Shanno (BFGS) method \cite{10.1093/imamat/6.1.76, 10.1093/comjnl/13.3.317, 10.2307/2004873, 10.2307/2004840} almost certainly being the best known quasi-Newton method. 

Modern quasi-Newton methods were developed for problems involving the optimization of smooth functions without constraints. The BFGS method is the best known quasi-Newton method because in practice it has demonstrated superior performance due to its very effective self-correcting properties \cite{Nocedal2006}. Accordingly, BFGS has since been extended to handle box constraints \cite{doi:10.1137/0916069}, and shown to be effective even for some nonsmooth optimization problems \cite{Lewis2013NonsmoothOV}. Furthermore, a limited memory version of BFGS known as L-BFGS \cite{Liu1989} has become a favourite algorithm for solving optimization problems with a very large number of variables, as it avoids directly storing approximate inverse Hessian matrices. However, BFGS and its relatives were not designed to explicitly handle noisy optimization problems, and noise can unacceptably degrade the performance of these methods. 

The authors of \cite{doi:10.1137/140954362} make the important observation that quasi-Newton updating is inherently an overwriting process rather than an averaging process. Fundamentally, differencing noisy gradients can produce harmful efffects because the resulting approximate curvature information may be inaccurate, and this inaccurate curvature information may overwrite accurate curvature information. Newton's method can naturally be viewed as a local rescaling of coordinates so that the rescaled problem is better conditioned than the original problem. Quasi-Newton methods attempt to perform a similar rescaling, but instead of using the (inverse) Hessian matrix to obtain curvature information for the rescaling, they use differences of gradients to obtain curvature information. Thus, it should be unsurprising that inaccurate curvature information obtained from differencing noisy gradients can be problematic because it means the resulting rescaling of the problem can be poor, and the conditioning of the rescaled problem could be even worse than the conditioning of the original problem.

With the above in mind, several works have dealt with how to improve the performance of quasi-Newton methods in the presence of noise. Many recent works focus on the empirical risk minimization (ERM) problem, which is ubiquitous in machine learning. For example, in \cite{doi:10.1137/140954362} the authors propose a technique designed for the stochastic approximation (SA) regime that employs subsampled Hessian-vector products to collect curvature information pointwise and at spaced intervals, in contrast to the classical approach of computing the difference of gradients at each iteration. This work is built upon in \cite{pmlr-v51-moritz16}, where the authors present a stochastic L-BFGS algorithm that draws upon the variance reduction approach of \cite{Johnson2013AcceleratingSG}. In \cite{doi:10.1137/15M1053141}, the authors outline a stochastic damped limited-memory BFGS (SdLBFGS) method that employs damping techniques used in sequential quadratic programming (SQP). A stochastic block BFGS method that updates the approximate inverse Hessian matrix using a sketch of the Hessian matrix is proposed in \cite{pmlr-v48-gower16}. Further work on stochastic L-BFGS algorithms, including convergence results, can be found in \cite{pmlr-v2-schraudolph07a, 10.5555/2789272.2912100, Zhao2018StochasticLI, 8626766}. 

Despite the importance of the ERM problem due to the current prevalence of machine learning, there are still a variety of important noisy optimization problems that arise in other contexts. In engineering design, numerical simulations are often employed in place of conducting costly, if even feasible, physical experiments. In this context, one tries to find optimal design parameters using the numerical simulation instead of physical experiments. Some examples from aerospace engineering, including interplanetary trajectory and wing design, can be found in \cite{Fasano2019, doi:10.1002/0470855487, doi:10.2514/1.J057294}. Examples from materials engineering include stable composite design \cite{doi:10.1177/1464420716664921} and ternary alloy composition \cite{Graf2017}, amongst others \cite{Munoz-Rojas2016}, while examples from electrical engineering include power system operation \cite{doi:10.1002/9780470466971}, hardware verification \cite{Gal_2020_HowToCatchALion}, and antenna design \cite{Koziel2014}. Noise is often an unavoidable property of such numerical simulations, as the simulations can include stochastic internal components, and floating point arithmetic vulnerable to roundoff error. Apart from the analysis of the BFGS method with bounded errors in \cite{doi:10.1137/19M1240794}, there is relatively little work on the behaviour of quasi-Newton methods in the presence of general bounded noise. As optimizing noisy numerical simulations does not always fit the framework of the ERM problem, analyses of the behaviour of quasi-Newton methods in the presence of general bounded noise are of practical value when optimizing numerical simulations. 

\subsection{Contributions}

Noise is inevitably introduced into machine learning problems due to the approximations required to handle large datasets, and numerical simulations due to the effects of finite precision arithmetic, and parts of the simulator containing inherently stochastic components. In this paper, we return to the fundamental theory underlying the design of quasi-Newton methods, which allows us to design a new variant of the BFGS method that explicitly handles the corrupting effects of noise. We do this as follows:
\begin{enumerate} 

	\item
	In Section~\ref{sec:mathematical-background}, we review the setup and derivation of the original BFGS method.
	
	\item
	In Section~\ref{sec:SPBFGS-derivation}, motivated by regularized least squares estimation, we treat the secant condition of BFGS with a penalty method. This creates a new BFGS update formula that we refer to as secant penalized BFGS (SP-BFGS), which we show reduces to the original BFGS update formula in a limiting case, as expected.

	\item
	In Section~\ref{sec:algorithmic-framework}, we present an algorithmic framework for practically implementing SP-BFGS updating. We also discuss implementation details, including how to perform a line search and choose the penalty parameter in the presence of noise. 

	\item
	In Section~\ref{sec:convergence-analysis}, we discuss the theoretical properties of SP-BFGS, including how the penalty parameter influences the eigenvalues of the approximate inverse Hessian. This allows us to show that under appropriate conditions SP-BFGS iterations are guaranteed to converge linearly to a neighborhood of the global minimizer when minimizing strongly convex functions in the presence of uniformly bounded noise. 
	
	\item
	In Section~\ref{sec:numerical-experiments}, we study the empirical performance of SP-BFGS updating compared to BFGS updating by performing extensive numerical experiments with both convex and nonconvex objective functions corrupted by function and gradient noise. Results from a diverse set of over 30 problems from the CUTEst test problem set demonstrate that intelligently implemented SP-BFGS updating frequently outperforms BFGS updating in the presence of noise.
	
	\item 
	Finally, Section~\ref{sec:final-remarks} concludes the paper and outlines directions for further work. 
		
\end{enumerate}

%==========================
\section{Mathematical Background}
\label{sec:mathematical-background}
%==========================
In this section, as preliminaries to the main results of this paper, we review the setup and derivation of the original BFGS method.

\subsection{BFGS Setup}
\label{sec:BFGS-Setup}

The BFGS method was originally designed to solve the following unconstrained optimization problem 
\begin{equation} \label{eq:phi}
\min_{x} \big \{ \phi(x) \big \}
\end{equation}
with $x \in \mathbb{R}^{n}$, $\phi: \mathbb{R}^{n} \mapsto \mathbb{R}$, and $\phi$ being a smooth twice continuously differentiable and nonnoisy function. Below, we use the notational conventions of \cite{Nocedal2006}, including $\phi_k = \phi(x_k)$. We begin by using the Taylor expansion of $\phi$ to build a local quadratic model $m_{k}$ of the objective function $\phi$ at the $k^{th}$ iterate $x_{k}$ of the optimization procedure
\begin{equation}
\phi(x_k + p) \approx \phi_{k} + \nabla \phi_{k}^{T} p + \frac{1}{2} p^{T} B_{k} p = m_{k}(p)
\end{equation} 
where $B_{k}$ is an $n \times n$ symmetric positive definite matrix that approximates the Hessian matrix (i.e. $B_{k} \approx \nabla^{2} \phi_{k}$). By setting the gradient of $m_{k}$ to zero, we see that the unique minimizer $p_{k}$ of this local quadratic model is 
\begin{equation}
p_{k} = - B_{k}^{-1} \nabla \phi_{k}
\end{equation}
and thus it is natural to update the next iterate $x_{k+1}$ as
\begin{equation} \label{eq:line-search-iteration}
x_{k+1} = x_{k} + \alpha_{k} p_{k}
\end{equation}
where $\alpha_{k}$ is the step size along the direction $p_{k}$, which is often chosen using a line search. 

To avoid computing $B_{k}$ from scratch at each iteration $k$, we use the curvature information from recent gradient evaluations to update $B_{k}$, and thus relatively economically form $B_{k+1}$. A Taylor expansion of $\nabla \phi$ reveals 
\begin{equation}
\nabla \phi(x_k + p) \approx \nabla \phi_k + \nabla^2 \phi_k p 
\end{equation}
and so it is reasonable to require that the new approximate Hessian $B_{k+1}$ satisfies
\begin{equation}
\nabla \phi_{k+1} = \nabla \phi_{k} + \alpha_{k} B_{k+1} p_{k}
\end{equation}
which rearranges to
\begin{equation}
B_{k+1} \alpha_{k} p_{k} = \nabla \phi_{k+1} - \nabla \phi_{k}.
\end{equation}
Now, define the two new quantities $s_{k}$ and $y_{k}$ as 
\begin{subequations}
\begin{equation} \label{eq:s_k-definition}
s_{k} \coloneqq x_{k+1} - x_{k} = \alpha_{k} p_{k}, 
\end{equation} 
\begin{equation} \label{eq:y_k-definition}
y_{k} \coloneqq \nabla \phi_{k+1} - \nabla \phi_{k}.
\end{equation}
\end{subequations}
Thus, we arrive at (\ref{eq:secant_B}), which is known as the \textit{secant condition}
\begin{equation} \label{eq:secant_B}
B_{k+1} s_{k} = y_{k}.
\end{equation}
In words, the secant condition dictates that the new approximate Hessian $B_{k+1}$ must map the measured displacement $s_{k}$ into the measured difference of gradients $y_{k}$. If we denote the approximate inverse Hessian $H_{k} = B_{k}^{-1} \approx \nabla^{2} \phi_{k}^{-1}$, then the secant condition can be equivalently expressed as (\ref{eq:secant_H})
\begin{equation} \label{eq:secant_H}
H_{k+1} y_{k} = s_{k}.
\end{equation}

As $H_{k+1}$ is not yet uniquely determined, to obtain the BFGS update formula, we impose a minimum norm restriction. Specifically, we choose $H_{k+1}$ to be the solution of the following quadratic program over matrices
\begin{equation} \label{eq:quad_program}
\min_{H} \bigg \{ \frac{1}{2} \norm{W^{1/2} (H - H_{k}) W^{1/2}}_{F}^{2} \bigg \} \quad \text{s.t.} \quad H = H^{T}, \text{~~~} H y_{k} = s_{k}
\end{equation} 
where $|| \cdot ||_{F}$ denotes the Frobenius norm, and $W^{1/2}$ the principal square root (see \cite{MatrixAnalysisHornJohnson} or a similar reference) of a symmetric positive definite weight matrix $W$ satisfying 
\begin{equation} \label{eq:W}
W s_{k} = y_{k}.
\end{equation}
As we will see, choosing the weight matrix $W$ to satisfy (\ref{eq:W}) ensures that the resulting optimization method is scale invariant. The weight matrix $W$ can be chosen to be any symmetric positive definite matrix satisfying (\ref{eq:W}), and the specific choice of $W$ is not of great importance, as $W$ will not appear directly in the main results of this paper. However, as a concrete example from \cite{Nocedal2006}, one could assume $W = \bar{G}_{k}$, where $\bar{G}_{k}$ is the average Hessian defined by
\begin{equation} \label{eq:average-Hessian-G}
\bar{G}_{k} = \int_{0}^{1} \nabla^2 \phi(x_k + t \alpha_k p_k) d t  \text{~~}.
\end{equation}

\subsection{Solving For The BFGS Update}
\label{sec:Solving-For-BFGS-Update}
To solve the quadratic program given by (\ref{eq:quad_program}), we setup a Lagrangian $\mathcal{L}(H,q,\Gamma)$ involving the constraints. Recalling that
\begin{equation}
\norm{W^{1/2} (H - H_{k}) W^{1/2}}^{2}_{F} = \Tr \bigg ( W (H - H_{k}) W (H - H_{k})^T \bigg )  \text{~},
\end{equation}
this gives the Lagrangian defined by (\ref{eq:BFGS-lagrangian}) below
\small
\begin{equation} \label{eq:BFGS-lagrangian}
\mathcal{L} = \frac{1}{2} \Tr \bigg (W (H - H_{k}) W (H - H_{k})^T \bigg ) + \Tr \bigg ( (H y_k - s_k) q^T \bigg ) + \Tr \bigg ( \Gamma (H - H^T ) \bigg )  
\end{equation}
\normalsize
where $q$ is a vector of Lagrange multipliers associated with the secant condition, and $\Gamma$ is a matrix of Lagrange multipliers associated with the symmetry condition. Taking the derivative of the Lagrangian $\mathcal{L}(H,q,\Gamma)$ with respect to the matrix $H$ yields
\begin{equation}
\frac{\partial \mathcal{L}(H,q,\Gamma)}{\partial H} = W (H - H_k) W + q y_k^T + \Gamma^T - \Gamma 
\end{equation} 
and so we have the Karush-Kuhn-Tucker (KKT) system defined by the three equations~(\ref{eq:BFGS_lagrange_FOC_KKT}),~(\ref{eq:BFGS_sec_KKT}), and~(\ref{eq:BFGS_sym_KKT}) below
\begin{subequations}
\begin{equation} \label{eq:BFGS_lagrange_FOC_KKT}
W (H - H_k) W + q y_k^T + \Gamma^T - \Gamma = 0  \text{~},
\end{equation} 
\begin{equation} \label{eq:BFGS_sec_KKT}
H y_{k} - s_{k} = 0  \text{~},
\end{equation}
\begin{equation} \label{eq:BFGS_sym_KKT}
H - H^{T} = 0  \text{~}. 
\end{equation} 
\end{subequations}
For brevity, we omit the details of the solution of the KKT system defined above because it is a limiting case of the system solved in Theorem~\ref{thm:sp-bfgs-update}. For an alternative geometric solution technique, we refer the interested reader to Section 2 of \cite{doi:10.1080/10556780802367205}. The minimizer $H^{*} = H_{k+1}$ is given by the well known BFGS update formula
\begin{equation} \label{eq:BFGS-Direct-Update}
H_{k+1} = \bigg ( I - \frac{s_k y_k^T}{s_k^T y_k} \bigg ) H_k \bigg ( I - \frac{y_k s_k^T}{s_k^T y_k} \bigg ) + \frac{s_k s_k^T}{s_k^T y_k}
\end{equation}
which, if we define the curvature parameter $\rho_{k} = \frac{1}{s_{k}^T y_{k}}$, can be equivalently written as
\begin{equation} \label{eq:BFGS-Direct-Update-Rho}
H_{k+1} = \bigg ( I - \rho_k s_k y_k^T \bigg ) H_k \bigg ( I - \rho_k y_k s_k^T \bigg ) + \rho_k s_k s_k^T.
\end{equation} 

Applying the Sherman-Morrison-Woodbury formula (see \cite{10.2307/2030425}) to the BFGS update formula immediately above, one can also write the BFGS update in terms of the approximate Hessian $B_{k} = H_{k}^{-1}$ instead of the approximate inverse Hessian. Again, for brevity, the details are omitted because they are a special case of Theorem~\ref{thm:sp-bfgs-inverse-update} shown later. The result is
\begin{equation} \label{eq:BFGS-Inverse-Update}
B_{k+1} = B_k - \frac{B_k s_k s_k^T B_k}{s_k^T B_k s_k} + \frac{y_k y_k^T}{s_k^T y_k} = B_k - \frac{B_k s_k s_k^T B_k}{s_k^T B_k s_k} + \rho_k y_k y_k^T.
\end{equation}

To ensure the updated approximate Hessian $B_{k+1}$ is positive definite, we must enforce that 
\begin{equation} \label{eq:B-pd}
s_k^T B_{k+1} s_k > 0 .
\end{equation}
Substituting $B_{k+1} s_k = y_k$ from the secant condition, the condition~(\ref{eq:B-pd}) becomes
\begin{equation} \label{eq:curvature-condition}
s_k^T y_k > 0
\end{equation}
which is known as the \textit{curvature condition}, as it is equivalent to 
\begin{equation} \label{eq:curvature-condition-rho}
\frac{1}{\rho_k} > 0 .
\end{equation}

%==========================
\section{Derivation Of Secant Penalized BFGS}
\label{sec:SPBFGS-derivation}
%==========================
In this section, having reviewed the construction of the original BFGS method, we now show how treating the secant condition with a penalty method approach motivated by regularized least squares estimation allows one to generalize the original BFGS update.

\subsection{Penalizing The Secant Condition}
\label{sec:Penalizing-The-Secant-Condition}
By applying a penalty method (see Chapter 17 of \cite{Nocedal2006}) to the secant condition instead of directly enforcing the secant condition as a constraint, we obtain the problem
\small
\begin{equation} \label{eq:SPBFGS-objective}
\min_{H} \bigg \{ \frac{1}{2} \norm{W^{1/2} (H - H_{k}) W^{1/2}}_{F}^{2} + \frac{\beta_k}{2} \norm{W^{1/2} (H y_k - s_k)}_2^2 \bigg \} \quad \text{s.t.} \quad H = H^{T} 
\end{equation} 
\normalsize
where $\beta_k \in [0, +\infty]$ is a penalty parameter that determines how strongly to penalize violations of the secant condition. As we will see, one recovers the solution to the constrained problem (\ref{eq:quad_program}) in the limit $\beta_k = +\infty$, so $\beta_k$ can be intuitively thought of as the cost of violating the secant condition. By treating the symmetry constraint with a matrix $\Gamma$ of Lagrange multipliers again, we obtain the following Lagrangian
\small
\begin{equation} \label{eq:SPBFGS-lagrangian}
\mathcal{L}= \frac{1}{2} \Tr \bigg (W (H - H_{k}) W (H - H_{k})^T \bigg ) + \frac{\beta_k}{2} \norm{W^{1/2} (H y_k - s_k)}_2^2 + \Tr \bigg ( \Gamma (H - H^T ) \bigg )  \text{~}.
\end{equation}
\normalsize
Defining the residual associated with the secant condition as $r_k(H) \coloneqq H y_k - s_k$ and $u \coloneqq \beta_k W r_k$, the first order optimality conditions of (\ref{eq:SPBFGS-lagrangian}) can be written as the system
\begin{subequations}
\begin{equation} \label{eq:SP-BFGS-lagrange-FOC}
W (H - H_k) W + u y_k^T + \Gamma^T - \Gamma = 0  \text{~},
\end{equation} 
\begin{equation} \label{eq:SP-BFGS-sec-cond}
H y_{k} - s_{k} - \frac{W^{-1} u}{\beta_k} = 0  \text{~},
\end{equation}
\begin{equation} \label{eq:SP-BFGS-sym-cond}
H - H^{T} = 0  \text{~}. 
\end{equation} 
\end{subequations}
Note that, as expected, in the limit $\beta_k = +\infty$, the system given by (\ref{eq:SP-BFGS-lagrange-FOC}), (\ref{eq:SP-BFGS-sec-cond}), and (\ref{eq:SP-BFGS-sym-cond}) reduces to the KKT system given by (\ref{eq:BFGS_lagrange_FOC_KKT}),~(\ref{eq:BFGS_sec_KKT}), and~(\ref{eq:BFGS_sym_KKT}).

We now find an explicit closed form solution to the problem given by (\ref{eq:SPBFGS-objective}), which is given in Theorem~\ref{thm:sp-bfgs-update}. 
\begin{theorem}[SP-BFGS Update] \label{thm:sp-bfgs-update}
The update formula given by the minimizer $H^{*}$ of the problem defined by (\ref{eq:SPBFGS-objective}), which can be obtained by solving the system given by (\ref{eq:SP-BFGS-lagrange-FOC}),~(\ref{eq:SP-BFGS-sec-cond}), and~(\ref{eq:SP-BFGS-sym-cond}), is the SP-BFGS update
\small
\begin{equation} \label{eq:SP-BFGS-Direct-Update-factored}
H_{k+1} = \bigg ( I - \omega_k s_k y_k^T \bigg ) H_k \bigg ( I - \omega_k y_k s_k^T \bigg ) + \omega_k \bigg [ \frac{\gamma_k}{\omega_k} + (\gamma_k - \omega_k) y_k^T H_k y_k \bigg ] s_k s_k^T 
\end{equation}
\normalsize
where 
\begin{equation} \label{eq:gamma-omega-definitions}
\gamma_k = \frac{1}{(s_k^T y_k + \frac{1}{\beta_k})}, \quad \omega_k = \frac{1}{(s_k^T y_k + \frac{2}{\beta_k})} .
\end{equation}
\end{theorem}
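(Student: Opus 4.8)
The plan is to solve the first-order system (\ref{eq:SP-BFGS-lagrange-FOC})--(\ref{eq:SP-BFGS-sym-cond}) directly, closely following the classical BFGS derivation but carrying the extra relaxation term through the algebra. Before doing so, I would note that the objective in (\ref{eq:SPBFGS-objective}) is a strictly convex quadratic in the entries of $H$ (because $H \mapsto W^{1/2} H W^{1/2}$ is an invertible linear map and $W$ is symmetric positive definite), restricted to the linear subspace of symmetric matrices; hence a stationary point of the Lagrangian is the unique global minimizer, and it suffices to exhibit the $H$ solving the system.

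First I would eliminate the symmetry multiplier $\Gamma$. Adding (\ref{eq:SP-BFGS-lagrange-FOC}) to its transpose and using $H = H^{T}$, $H_k = H_k^{T}$, and $W = W^{T}$ cancels the antisymmetric part $\Gamma^{T} - \Gamma$ and yields $W (H - H_k) W = -\frac{1}{2}(u y_k^{T} + y_k u^{T})$. Left- and right-multiplying by $W^{-1}$ and invoking the weighting identity (\ref{eq:W}) in the form $W^{-1} y_k = s_k$, this becomes $H = H_k - \frac{1}{2}(v s_k^{T} + s_k v^{T})$, where I abbreviate $v \coloneqq W^{-1} u$. This is the exact analogue of the step that produces the rank-two correction in standard BFGS, with $v$ still to be determined.

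Next I would pin down $v$ using the relaxed secant equation (\ref{eq:SP-BFGS-sec-cond}), which in terms of $v$ reads $H y_k - s_k = v / \beta_k$. Substituting the expression for $H$ gives a single vector equation that is linear in $v$ once the scalar $c \coloneqq v^{T} y_k$ is treated as an unknown constant; solving it expresses $v$ as a linear combination of $H_k y_k$ and $s_k$ with coefficients involving $\omega_k$, $c$, and $s_k^{T} y_k$. Taking the inner product of this relation with $y_k$ then produces a scalar equation for $c$ alone, whose solution simplifies to $c = \gamma_k (y_k^{T} H_k y_k - s_k^{T} y_k)$ after using the key algebraic identities $s_k^{T} y_k + \frac{1}{\beta_k} = \frac{1}{\gamma_k}$, $s_k^{T} y_k + \frac{2}{\beta_k} = \frac{1}{\omega_k}$, and their consequence $1 + \omega_k s_k^{T} y_k = \frac{2\omega_k}{\gamma_k}$.

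Finally, I would back-substitute the now fully explicit $v$ into $H = H_k - \frac{1}{2}(v s_k^{T} + s_k v^{T})$ and reorganize. The cross terms assemble into the symmetric expression $-\omega_k H_k y_k s_k^{T} - \omega_k s_k y_k^{T} H_k$, which I would recognize as the off-diagonal part of the factored product $(I - \omega_k s_k y_k^{T}) H_k (I - \omega_k y_k s_k^{T})$; matching the remaining $s_k s_k^{T}$ coefficient then requires only the identity $\omega_k(2 - \gamma_k s_k^{T} y_k) = \gamma_k$, which follows directly from the definitions in (\ref{eq:gamma-omega-definitions}). I expect the main obstacle to be exactly this bookkeeping: carefully solving the coupled scalar/vector relations for $c$ and $v$, and then collapsing the several rank-one $s_k s_k^{T}$ contributions into the single bracketed coefficient $\frac{\gamma_k}{\omega_k} + (\gamma_k - \omega_k) y_k^{T} H_k y_k$, since the penalty term $\frac{1}{\beta_k}$ permeates every expression and must be repeatedly re-expressed through $\gamma_k$ and $\omega_k$ to reach the stated form.
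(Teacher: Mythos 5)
Your proposal is correct and follows essentially the same route as the paper's proof: eliminate $\Gamma$ by symmetrizing the first-order condition, reduce to a symmetric rank-two correction with an undetermined vector, use the relaxed secant equation to derive a scalar equation for the inner product with $y_k$ (your $c = v^T y_k$ is exactly the paper's $u^T W^{-1} y_k$, and your value $\gamma_k(y_k^T H_k y_k - s_k^T y_k)$ matches), back-substitute, and complete the square. The only differences are cosmetic --- you apply $W^{-1} y_k = s_k$ earlier and add an explicit convexity argument for why the stationary point is the global minimizer, which the paper leaves implicit.
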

\begin{proof}
See Appendix~\ref{app:SPBFGS-update-proof}.
\smartqed
\end{proof}

At this point, a few comments are in order regarding the SP-BFGS update given by (\ref{eq:SP-BFGS-Direct-Update-factored}). First, observe that as $\beta_k \rightarrow +\infty$, we have that $\omega_k \rightarrow \rho_k$ and $\gamma_k \rightarrow \rho_k$. As a result, when $\beta_k = +\infty$, one recovers the original BFGS update, as expected. Second, also observe that as $\beta_k \rightarrow 0$, we have that $\omega_k \rightarrow 0$ and $\gamma_k \rightarrow 0$. As a result, we see that in the case $\beta_k = 0$ the SP-BFGS update reduces to $H_{k+1} = H_{k}$. This is again expected because as $\beta_k \rightarrow 0$, the cost of violating the secant condition goes to zero, and the minimum norm symmetric update is simply $H_{k+1} = H_{k}$.  

We now examine what the analog of the curvature condition (\ref{eq:curvature-condition}) is for SP-BFGS. Lemma~\ref{thm:sp-bfgs-curv-cond} demonstrates that (\ref{eq:sp-bfgs-curv-cond}) is the SP-BFGS analog of the BFGS curvature condition (\ref{eq:curvature-condition}).
\begin{lemma}[Positive Definiteness Of SP-BFGS Update] \label{thm:sp-bfgs-curv-cond}
If $H_{k}$ is positive definite, then the $H_{k+1}$ given by the SP-BFGS update (\ref{eq:SP-BFGS-Direct-Update-factored}) is positive definite if and only if the SP-BFGS curvature condition
\begin{equation} \label{eq:sp-bfgs-curv-cond}
s_k^T y_k > - \frac{1}{\beta_k} 
\end{equation}
is satisfied.
\end{lemma}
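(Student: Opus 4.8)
The plan is to reduce the positive definiteness of $H_{k+1}$ to that of a rank-one modification of $H_k$, and then to resolve the resulting scalar inequality using a Cauchy--Schwarz estimate. Writing the coefficient of $s_k s_k^T$ in (\ref{eq:SP-BFGS-Direct-Update-factored}) as $c_k = \gamma_k + \omega_k(\gamma_k - \omega_k) y_k^T H_k y_k$, I would first take an arbitrary $z \in \mathbb{R}^n$ and expand the quadratic form as
\begin{equation}
z^T H_{k+1} z = w^T H_k w + c_k (s_k^T z)^2, \qquad w = (I - \omega_k y_k s_k^T) z .
\end{equation}
Because $\det(I - \omega_k y_k s_k^T) = 1 - \omega_k s_k^T y_k = 2\omega_k/\beta_k \neq 0$, the linear map $z \mapsto w$ is a bijection, so I can change variables from $z$ to $w$. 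Solving for $s_k^T z$ in terms of $s_k^T w$ and substituting yields $z^T H_{k+1} z = w^T (H_k + c_k' s_k s_k^T) w$ for an explicit scalar $c_k'$, whence $H_{k+1}$ is positive definite if and only if $H_k + c_k' s_k s_k^T$ is positive definite.

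Next, since $H_k$ is positive definite, I would apply the standard rank-one criterion (obtained from the congruence $H_k + c_k' s_k s_k^T = H_k^{1/2}(I + c_k'\, vv^T) H_k^{1/2}$ with $v = H_k^{-1/2} s_k$, or equivalently the matrix determinant lemma): $H_k + c_k' s_k s_k^T$ is positive definite if and only if $1 + c_k'\, s_k^T H_k^{-1} s_k > 0$. It then remains to substitute $\gamma_k = (s_k^T y_k + 1/\beta_k)^{-1}$ and $\omega_k = (s_k^T y_k + 2/\beta_k)^{-1}$, together with $c_k$ and $c_k'$, and to simplify $1 + c_k'\, s_k^T H_k^{-1} s_k$ into a single fraction $N/D$. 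I expect $D$ to be a manifestly positive multiple of $(s_k^T y_k + 1/\beta_k)$, so that the sign of the whole criterion is governed jointly by the sign of $N$ and the sign of $s_k^T y_k + 1/\beta_k$.

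The crux is to show that the numerator $N$ is strictly positive for every admissible choice of data. Writing $m = s_k^T H_k^{-1} s_k > 0$ and $h = y_k^T H_k y_k \geq 0$, the term in $N$ proportional to $s_k^T y_k + 1/\beta_k$ can be negative precisely in the regime where the curvature condition fails, so a naive bound is inconclusive. The key observation is the Cauchy--Schwarz inequality in the $H_k$ inner product, $(s_k^T y_k)^2 = \big((H_k^{-1/2} s_k)^T (H_k^{1/2} y_k)\big)^2 \leq m\,h$; using $mh \geq (s_k^T y_k)^2$ to bound from below the single indefinite term, I expect $N$ to be bounded below by $(m + 1/\beta_k)(s_k^T y_k + 2/\beta_k)^2$, which is strictly positive because $\omega_k$ is well defined (so $s_k^T y_k + 2/\beta_k \neq 0$) and $m, 1/\beta_k > 0$. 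With $N > 0$ established, the criterion $1 + c_k'\, s_k^T H_k^{-1} s_k > 0$ holds if and only if $D > 0$, i.e. if and only if $s_k^T y_k + 1/\beta_k > 0$, which is exactly (\ref{eq:sp-bfgs-curv-cond}); this delivers both directions of the equivalence simultaneously. The degenerate cases $s_k = 0$ or $y_k = 0$ reduce the update to $H_{k+1} = H_k + \gamma_k s_k s_k^T$ with $\gamma_k > 0$ and are checked directly. The main obstacle is precisely the positivity of $N$: without the Cauchy--Schwarz relation linking $m$, $h$, and $s_k^T y_k$, the sign of $N$ is genuinely ambiguous when the curvature condition is violated, and the clean equivalence would appear to break down.
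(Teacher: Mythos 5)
Your argument is correct, and it takes a genuinely different route from the paper's. The paper splits the equivalence into two separate arguments: for sufficiency it writes $H_{k+1} = G^T H_k G + d\, s_k s_k^T$ and shows that under (\ref{eq:sp-bfgs-curv-cond}) every ingredient of $d = \gamma_k + \omega_k(\gamma_k - \omega_k) y_k^T H_k y_k$ is nonnegative with $\gamma_k > 0$, so $d > 0$ and positive definiteness follows from a direct quadratic-form argument; for necessity it tests only the vector $y_k$, using the penalized secant equation $H_{k+1} y_k = s_k + W^{-1}u/\beta_k$ to derive $y_k^T H_{k+1} y_k = \bigl(\beta_k (y_k^T s_k)^2 + y_k^T H_k y_k\bigr)/\bigl(1 + \beta_k y_k^T s_k\bigr)$, whose numerator is manifestly positive so that the sign is governed by $1 + \beta_k y_k^T s_k$. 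You instead obtain both directions from a single scalar criterion: the congruence $z \mapsto w = (I - \omega_k y_k s_k^T)z$ reduces everything to $1 + c_k'\, s_k^T H_k^{-1} s_k > 0$, and the Cauchy--Schwarz inequality $(s_k^T y_k)^2 \leq (s_k^T H_k^{-1} s_k)(y_k^T H_k y_k)$ completes the square in the numerator, yielding $N \geq \beta_k^2\bigl(s_k^T H_k^{-1}s_k + 1/\beta_k\bigr)\bigl(s_k^T y_k + 2/\beta_k\bigr)^2 > 0$ (your stated bound is off by the harmless positive factor $\beta_k^2$), so the sign is controlled entirely by $s_k^T y_k + 1/\beta_k$. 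Your identification of Cauchy--Schwarz as the crux is accurate --- without it $N$ really can change sign --- and it is precisely what the paper avoids needing by choosing the special test vector $y_k$ for necessity; the price of the paper's shortcut is that it must run two disjoint arguments, one of which leans on the KKT machinery from the derivation of the update. One caveat for your version: the change of variables degenerates when $1 - \omega_k s_k^T y_k = 2\omega_k/\beta_k = 0$, i.e.\ in the limit $\beta_k = +\infty$, so that case (ordinary BFGS) and the boundary cases $s_k^T y_k \in \{-1/\beta_k, -2/\beta_k\}$, where $\gamma_k$ or $\omega_k$ is undefined, should be handled separately --- though the paper's own proof is no more careful there.
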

\begin{proof}
See Appendix~\ref{app:sp-bfgs-curv-cond-proof}.
\smartqed
\end{proof}

The result in Lemma~\ref{thm:sp-bfgs-curv-cond} warrants some discussion. First, the limiting behaviour with respect to $\beta_k$ is consistent with Theorem~\ref{thm:sp-bfgs-update}. As $\beta_k \rightarrow +\infty$, condition~(\ref{eq:sp-bfgs-curv-cond}) reduces to the BFGS curvature condition (\ref{eq:curvature-condition}). As $\beta_k \rightarrow 0$, condition~(\ref{eq:sp-bfgs-curv-cond}) reduces to no condition at all, as $s_k^T y_k > - \infty$ is always true. This is consistent with the observation that when $\beta_k = 0$, the minimum norm symmetric update is $H_{k+1} = H_{k}$, and in this case $H_{k+1}$ is guaranteed to be positive definite if $H_k$ is positive definite, regardless of $s_k^T y_k$. 

From the proof of Lemma~\ref{thm:sp-bfgs-curv-cond} (see (\ref{eq:convex-combination})), it is now clear that
\begin{equation}
y_k^T H_{k+1} y_{k} = \bigg ( \frac{\beta_k y_k^T s_{k}}{1 + \beta_k y_k^T s_{k}} \bigg ) y_k^T s_{k} + \bigg ( \frac{1}{1 + \beta_k y_k^T s_{k}} \bigg ) y_k^T H_k y_k
\end{equation}
and so $y_{k}^T H_{k+1} y_{k}$ is a convex combination of $y_{k}^T s_k$ and $y_{k}^T H_k y_k$. Thus, $H_{k+1}$ interpolates between the current inverse Hessian approximation $H_k$ and the original BFGS update, and as $\beta_k$ decreases, the interpolation is increasingly biased towards the current approximation $H_k$. From a regularized least squares estimation perspective, $\beta_k$ plays the role of a regularization parameter that controls the amount of bias in the estimate of $H_{k+1}$. Note that this behaviour is somewhat similar to the behaviour of Powell damping \cite{Powell1978}, although Powell damping was introduced to handle approximating a potentially indefinite Hessian of the Lagrangian in constrained optimization problems, and not noise. 

We finish introducing the SP-BFGS update by applying the Sherman-Morrison-Woodbury formula to~(\ref{eq:SP-BFGS-Direct-Update-factored}), which allows us to write the update in terms of the approximate Hessian $B_k$ instead of the approximate inverse Hessian $H_k$. The result is given in Theorem~\ref{thm:sp-bfgs-inverse-update}.
\begin{theorem}[SP-BFGS Inverse Update] \label{thm:sp-bfgs-inverse-update}
The SP-BFGS update formula given by~(\ref{eq:SP-BFGS-Direct-Update-factored}) can be written in terms of $B_{k} = H_{k}^{-1}$ as
\begin{equation*}
\resizebox{.95\hsize}{!}{$B_{k+1} = B_k - \frac{\omega_k \bigg [ \bigg ( (\omega_k - \gamma_k) y_k^T B_k^{-1} y_k - \frac{\gamma_k}{\omega_k} \bigg ) B_k s_k s_k^T B_k + (1 - \omega_k s_k^T y_k) ( B_k s_k y_k^T + y_k s_k^T B_k ) + \omega_k (s_k^T B_k s_k) y_k y_k^T \bigg ]}{\big ( (\omega_k - \gamma_k) y_k^T B_k^{-1} y_k - \frac{\gamma_k}{\omega_k} \big ) \big ( \omega_k s_k^T B_k s_k \big ) - (1 - \omega_k y_k^T s_k)^2} . $}
\end{equation*}
\end{theorem}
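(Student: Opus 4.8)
The plan is to rewrite the factored SP-BFGS update (\ref{eq:SP-BFGS-Direct-Update-factored}) as the approximate inverse Hessian $H_k$ plus a \emph{symmetric rank-two} correction, and then invert it with a single application of the Sherman-Morrison-Woodbury (SMW) formula. Expanding the product $(I - \omega_k s_k y_k^T) H_k (I - \omega_k y_k s_k^T)$ and collecting terms, I would write
\begin{equation*}
H_{k+1} = H_k - \omega_k (H_k y_k) s_k^T - \omega_k s_k (H_k y_k)^T + \Big[ \omega_k^2 (y_k^T H_k y_k) + \gamma_k + \omega_k(\gamma_k - \omega_k) y_k^T H_k y_k \Big] s_k s_k^T,
\end{equation*}
where the bracketed scalar collects $\omega_k^2 (y_k^T H_k y_k)$ with the coefficient $\omega_k[\gamma_k/\omega_k + (\gamma_k - \omega_k) y_k^T H_k y_k]$ appearing in (\ref{eq:SP-BFGS-Direct-Update-factored}). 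A one-line simplification collapses this coefficient to $\gamma_k(1 + \omega_k y_k^T H_k y_k)$.

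Next I would set $U = [\,s_k \;\; H_k y_k\,]$, the $n\times 2$ matrix whose columns are $s_k$ and $H_k y_k$, so that the correction is exactly $U M U^T$ with
\begin{equation*}
M = \begin{pmatrix} \gamma_k(1 + \omega_k y_k^T H_k y_k) & -\omega_k \\ -\omega_k & 0 \end{pmatrix}, \qquad \det M = -\omega_k^2 .
\end{equation*}
Since $\det M = -\omega_k^2 \neq 0$ whenever $\beta_k$ is finite and positive, $M$ is invertible and SMW applies in the form $(H_k + U M U^T)^{-1} = B_k - B_k U (M^{-1} + U^T B_k U)^{-1} U^T B_k$. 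Two elementary facts drive all subsequent simplification: $B_k$ is symmetric, and $B_k H_k y_k = y_k$ because $B_k = H_k^{-1}$. The first gives $B_k U = [\,B_k s_k \;\; y_k\,]$, so the entire correction reduces to a combination of the outer products $B_k s_k s_k^T B_k$, $B_k s_k y_k^T + y_k s_k^T B_k$, and $y_k y_k^T$ — precisely the three terms appearing in the claimed formula. The second collapses the Gram matrix to
\begin{equation*}
U^T B_k U = \begin{pmatrix} s_k^T B_k s_k & s_k^T y_k \\ s_k^T y_k & y_k^T H_k y_k \end{pmatrix}.
\end{equation*}

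It then remains to form $N = M^{-1} + U^T B_k U$, invert the $2\times 2$ matrix $N$ by the cofactor formula, and read off the three scalar coefficients from the entries of $N^{-1}$. The routine but delicate step — the one I expect to be the main obstacle — is the final bookkeeping that turns the raw $N^{-1}$ entries, which carry factors of $1/\omega_k$ and $1/\omega_k^2$ inherited from $M^{-1}$, into the clean rational expression stated. The key maneuver is to substitute $y_k^T H_k y_k = y_k^T B_k^{-1} y_k$ and clear denominators by scaling the numerator and denominator of the correction through by $\omega_k^2$: this rewrites the entry $N_{22} = y_k^T H_k y_k - \gamma_k(1 + \omega_k y_k^T H_k y_k)/\omega_k^2$ as $\tfrac{1}{\omega_k}\big[(\omega_k - \gamma_k) y_k^T B_k^{-1} y_k - \gamma_k/\omega_k\big]$ and the off-diagonal $N_{12} = s_k^T y_k - 1/\omega_k$ as $\tfrac{1}{\omega_k}(\omega_k s_k^T y_k - 1)$. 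After scaling, $\omega_k^2 \det N$ reproduces exactly the stated denominator $\big((\omega_k - \gamma_k) y_k^T B_k^{-1} y_k - \gamma_k/\omega_k\big)(\omega_k s_k^T B_k s_k) - (1 - \omega_k y_k^T s_k)^2$, while the numerator acquires the overall factor $\omega_k$ and assembles into the symmetric pairing $B_k s_k y_k^T + y_k s_k^T B_k$ with the correct signs. As a final sanity check I would verify the limit $\beta_k \to +\infty$, where $\omega_k, \gamma_k \to \rho_k$ and the whole expression must collapse to the BFGS inverse update (\ref{eq:BFGS-Inverse-Update}).
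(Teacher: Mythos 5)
Your proposal is correct and follows essentially the same route as the paper's proof: both express the update as $H_k$ plus a rank-two correction factored through $U=[\,s_k \;\; H_k y_k\,]$ and apply Sherman--Morrison--Woodbury once, with the only cosmetic difference being that you absorb the scalar $\omega_k$ into the middle $2\times 2$ matrix (so $\det M=-\omega_k^2$) whereas the paper keeps it in the $U$ factor and works with a middle matrix of determinant $-1$. The subsequent bookkeeping --- the Gram matrix $U^T B_k U$, the cofactor inversion, and the rescaling by $\omega_k^2$ to reach the stated denominator --- matches the paper's computation step for step.
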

\begin{proof}
See Appendix~\ref{app:sp-bfgs-inverse-update-proof}.
\smartqed
\end{proof}

Note that the limiting behaviour of Theorem~\ref{thm:sp-bfgs-inverse-update} with respect to $\beta_k$ is again consistent. When $\beta_k = +\infty$, we obtain the original BFGS inverse update (\ref{eq:BFGS-Inverse-Update}), and when $\beta_k = 0$, we obtain $B_{k+1} = B_{k}$. One complication with respect to the SP-BFGS inverse update (\ref{eq:SP-BFGS-Inverse-Update}) is that $B_{k+1}$ cannot in general be expressed solely in terms of $B_k$ due to the presence of $y_k^T B_k^{-1} y_k$ (i.e. $y_k^T H_k y_k$) in the denominator.

%==========================
\section{Algorithmic Framework}
\label{sec:algorithmic-framework}
%==========================
We now outline how to practically implement SP-BFGS updating. We consider the situation where one has access to noise corrupted versions of a smooth function $\phi$ and its gradient $\nabla \phi$ that can be decomposed as 
\begin{equation} \label{eq:noisy-function-decomposition}
f(x) = \phi(x) + \epsilon(x) ,
\end{equation}
\begin{equation} \label{eq:noisy-gradient-decomposition}
g(x) = \nabla \phi(x) + e(x) .
\end{equation}
In~(\ref{eq:noisy-function-decomposition}) and~(\ref{eq:noisy-gradient-decomposition}), $\phi$ is a smooth twice continuously differentiable function as in Section~\ref{sec:BFGS-Setup}, and $\epsilon(x)$ is a scalar representing noise in the function evaluations. Similarly, $\nabla \phi$ is the gradient of the smooth function $\phi$, while $e(x)$ is a vector representing noise in the gradient evaluations. Similar decompositions are used in \cite{DFONoisyFunctionsQuasiNewton, Gal_2020_HowToCatchALion, doi:10.1137/19M1240794}. 

\subsection{Minimization Routine}
\label{sec:minimization-routine}

Algorithm~\ref{alg:SP-BFGS-routine} outlines a general procedure for minimizing a noisy function with noisy function and gradient values $f$ and $g$ that can be decomposed as shown in~(\ref{eq:noisy-function-decomposition}) and~(\ref{eq:noisy-gradient-decomposition}). The inputs to the procedure in Algorithm~\ref{alg:SP-BFGS-routine} are a means of evaluating the noisy objective function $f(x)$ and gradient $g(x)$, the starting point $x^0$, and an initial inverse Hessian approximation $H^0$. As the best convergence/stopping test is problem dependent, we note that standard gradient and function value based tests can be employed in conjuction with smoothing and noise estimation techniques (e.g. see Section 3.3.4 of \cite{DFONoisyFunctionsQuasiNewton}). In the next several subsections, we discuss how to choose the penalty parameter $\beta_k$ and step size $\alpha_k$, and appropriate courses of action for when the SP-BFGS curvature condition (\ref{eq:sp-bfgs-curv-cond}) fails.

\begin{algorithm}[h]
\caption{SP-BFGS Minimization Routine}
\label{alg:SP-BFGS-routine}
\begin{algorithmic}[1]
\Procedure{SP-BFGS-Minimize}{$f(x), g(x), x^0, H^0$}
\State $k \gets 0$
\State $H_k \gets H^0$ 
\State $x_k \gets x^0$ 
\While {Not Converged/Stopped}
  \State $p_k \gets - H_k g_k$
  \smallskip
  \State Choose step size $\alpha_k$
  \smallskip
  \State $x_{k+1} \gets x_{k} + \alpha_k p_k$
  \smallskip
  \State $s_{k} \gets x_{k+1} - x_{k}$ \qquad
  \smallskip
  \State $y_{k} \gets g_{k+1} - g_{k}$
  \smallskip
  \State Choose penalty parameter $\beta_k$
  \smallskip
  \If {$s_k^T y_k > - \frac{1}{\beta_k}$}
  \smallskip
  \State $\gamma_k \gets \frac{1}{(s_k^T y_k + \frac{1}{\beta_k})}, \quad \omega_k \gets \frac{1}{(s_k^T y_k + \frac{2}{\beta_k})}$
  \begin{equation*}
  \resizebox{.8\hsize}{!}{
  $H_{k+1} = \bigg ( I - \omega_k s_k y_k^T \bigg ) H_k \bigg ( I - \omega_k y_k s_k^T \bigg ) + \omega_k \bigg [ \frac{\gamma_k}{\omega_k} + (\gamma_k - \omega_k) y_k^T H_k y_k \bigg ] s_k s_k^T$}
  \end{equation*}
  \Else
	\State Trigger SP-BFGS curvature condition failure recovery procedure
    \smallskip
  \EndIf
  \State $k \gets k+1$
\EndWhile
\EndProcedure
\end{algorithmic}
\end{algorithm}

\subsection{Choosing The Penalty Parameter $\beta_k$}
\label{sec:choice-of-penalty-parameter}
As the choice of $\beta_k$ determines how strongly to bias the estimate of $H_{k+1}$ towards $H_{k}$, the choice of $\beta_k$ is fundamentally connected to the amount of noise present in the measured gradients $g_{k+1}$ and $g_{k}$. In brief, if the amount of noise present in the measured gradients is large, $\beta_k$ should be small to avoid overfitting the noise, and if the amount of noise present in the measured gradients is small, $\beta_k$ should be large to avoid underfitting curvature information. To make this point more rigorous, we introduce the following assumption. 
\begin{assumption}[Uniform Gradient Noise Bound] \label{assump:gradient-noise-bound}
There exists a nonnegative constant $\bar{\epsilon}_g \geq 0$ such that 
\begin{equation} \label{eq:uniform-gradient-noise-bound}
\norm{g(x) - \nabla \phi(x)}_2 = \norm{e(x)}_2 \leq \bar{\epsilon}_g, \qquad \forall x \in \mathbb{R}^n .
\end{equation} 
\end{assumption}

As $\nabla \phi(x)$ is continuous, for each $k \geq 0$ we have
\begin{equation}
\norm{\lim_{\alpha_k \downarrow 0} \big [ \nabla \phi(x_{k} + \alpha_k p_{k}) \big ] - \nabla \phi(x_{k})}_2 = 0 .
\end{equation}
However, due to noise we \textit{cannot} in general guarantee
\begin{equation}
\norm{\lim_{\alpha_k \downarrow 0} \big [ g(x_{k} + \alpha_k p_{k}) \big ] - g(x_{k})}_2 = 0 .
\end{equation}
Using the continuity of $\nabla \phi(x)$, Assumption~\ref{assump:gradient-noise-bound}, and the triangle inequality, one can conclude that 
\begin{equation}
0 \leq \norm{\lim_{\alpha_k \downarrow 0} \big [ g(x_{k} + \alpha_k p_{k}) \big ] - g(x_{k})}_2 \leq 2 \bar{\epsilon}_g .
\end{equation}
As a result, it is now clear that in the presence of uniformly bounded gradient noise, sending the step size $\alpha_k$ to zero, and thus $s_k$ to zero, only bounds the difference of measured gradients within a ball with radius dependent on the gradient noise bound $\bar{\epsilon}_g$. 

As $g_{k+1}$ and $g_{k}$ can be decomposed into smooth and noise components, so can $s_k^T y_k$, giving
\begin{equation}
s_k^T y_k = s_k^T y_k^{smooth} + s_k^T y_k^{noise} = s_k^T [ \nabla \phi_{k+1} - \nabla \phi_{k} ] + s_k^T [ e_{k+1} - e_{k} ] .
\end{equation}
In conjunction with the Cauchy-Schwarz inequality, Assumption~\ref{assump:gradient-noise-bound} implies that 
\begin{equation}
- 2 \bar{\epsilon}_g \norm{s_k}_2 \leq s_k^T [ e_{k+1} - e_{k} ] \leq 2 \bar{\epsilon}_g \norm{s_k}_2
\end{equation}
and so we have the lower and upper bounds
\begin{equation} \label{eq:curv-with-noise-lower-upper-bound}
- 2 \bar{\epsilon}_g \norm{s_k}_2 + s_k^T y_k^{smooth} \leq s_k^T y_k \leq s_k^T y_k^{smooth} + 2 \bar{\epsilon}_g \norm{s_k}_2 .
\end{equation}
From (\ref{eq:curv-with-noise-lower-upper-bound}), it is clear that the bound on the effect of the noise grows linearly with $\norm{s_k}_2$. However, by using the average Hessian $\bar{G}_{k}$ from (\ref{eq:average-Hessian-G}) and applying Taylor's theorem to $\nabla \phi$, it is also clear that
\begin{equation}
s_k^T y_k^{smooth} = s_k^T \bar{G}_{k} s_k = O \big ( \norm{s_k}_2^2 \big ) 
\end{equation}
and so
\begin{equation}
s_k^T y_k = O \big ( \norm{s_k}_2^2 \big ) + O \big ( \norm{s_k}_2 \big )
\end{equation}
where the $O \big ( \norm{s_k}_2^2 \big )$ term is due to the true curvature of the smooth function $\phi$, and the $O \big ( \norm{s_k}_2 \big )$ term is due to noise. Thus, we have now illustrated an important general behaviour given Assumption~\ref{assump:gradient-noise-bound}. As $\norm{s_k}_2$ dominates $\norm{s_k}_2^2$ as $\norm{s_k}_2 \rightarrow 0$, the effects of noise can dominate the true curvature for small $s_k$. Conversely, as $\norm{s_k}_2^2$ dominates $\norm{s_k}_2$ as $\norm{s_k}_2 \rightarrow +\infty$, the true curvature can dominate the effects of noise for large $s_k$. 

Given the above analysis, a simple strategy for choosing $\beta_k$ is to make $\beta_k$ grow linearly with $\norm{s_k}_2$, such as
\begin{equation} \label{eq:linear-beta-choice}
\beta_k = N_s \norm{s_k}_2 
\end{equation}
where $N_s > 0$ is a slope parameter. As $\norm{s_k}_2 \rightarrow 0$, $H_{k+1} \rightarrow H_{k}$, which is desirable because the effects of noise likely dominate as $\norm{s_k}_2 \rightarrow 0$. Increasingly biasing the estimate of $H_{k+1}$ towards $H_{k}$ reduces how much $H_{k+1}$ can be corrupted by noise, and relaxes the SP-BFGS curvature condition (\ref{eq:sp-bfgs-curv-cond}), reducing the likelihood of needing to trigger a recovery procedure described in Section~\ref{sec:mod-curv-fail-recovery-procedure}. Also, as shown earlier, because $\nabla \phi$ is continuous, the true difference of gradients is guaranteed to go to zero as $s_k$ approaches zero. As a result, without noise present, it is natural that $H_{k+1} \rightarrow H_{k}$ as $s_k \rightarrow 0$. In the presence of noise, we wish for this behaviour to be preserved. Informally, one can intuitively think of wanting $H_{k}$ to behave as an approximate \textit{average} inverse Hessian, and the averaging should remove the corrupting effects of noise, leaving $H_{k}$ to behave as if no noise were present. Similarly, as $\norm{s_k}_2 \rightarrow +\infty$, $\beta_k \rightarrow +\infty$, and one recovers the BFGS update in the limit, which is desirable because the effects of noise are likely dominated by the true curvature as $\norm{s_k}_2 \rightarrow +\infty$. The slope parameter $N_s$ dictates how sensitive $\beta_k$ is to $\norm{s_k}_2$, and should be set proportional to the gradient noise level (i.e. $\bar{\epsilon}_g$). Intuitively, if the gradient noise level is low, $\beta_k$ should grow quickly with $\norm{s_k}_2$, as the effect of noise diminishes quickly, and vice versa. 

It may also be desirable to modify (\ref{eq:linear-beta-choice}) to
\begin{equation} \label{eq:linear-beta-choice-min-threshold}
\beta_k = \max \bigg \{ N_s \norm{s_k}_2 - N_o, 0 \bigg \}
\end{equation}
where $N_o > 0$ is an intercept parameter. The inclusion of $N_o$ allows one to stop updating $H_k$ if $\norm{s_k}_2$ is sufficiently small. For example, it may be desirable to stop updating $H_k$ when one is very close to a stationary point, as gradient measurements are likely heavily dominated by noise.

\subsection{Choosing The Step Size $\alpha_k$}
\label{sec:choic-of-step-size}
Classically, during BFGS updating $\alpha_k$ is chosen to satisfy the Armijo-Wolfe conditions. As function and gradient evaluations are not corrupted by noise in the classical BFGS setting, we can write the Armijo condition, also known as the sufficient decrease condition, as
\begin{equation} \label{eq:armijo-condition-no-noise}
\phi_{k+1} \leq \phi_k + c_1 \alpha_k \nabla \phi_k^T p_k 
\end{equation}
and the Wolfe condition, also known as the curvature condition, as 
\begin{equation} \label{eq:wolfe-condition-no-noise}
\nabla \phi_{k+1}^T p_k \geq c_2 \nabla \phi_k^T p_k 
\end{equation}
where $0 < c_1 < c_2 < 1$, with well known choices being $c_1 = 10^{-4}$ and $c_2 = 0.9$. Observe that by adding $\nabla \phi_k^T p_k$ to both sides of (\ref{eq:wolfe-condition-no-noise}) and multiplying by $\alpha_k$, (\ref{eq:wolfe-condition-no-noise}) becomes
\begin{equation} \label{eq:wolfe-condition-no-noise-bfgs-notation}
y_k^T s_k = [ \nabla \phi_{k+1} - \nabla \phi_k ]^T \alpha_k p_k \geq (c_2 - 1) \nabla \phi_k^T \alpha_k p_k . 
\end{equation}
If $p_k$ is a descent direction then $\nabla \phi_k^T p_k < 0$, and combined with $(c_2 - 1) < 0$ and $\alpha_k > 0$, one sees that (\ref{eq:wolfe-condition-no-noise-bfgs-notation}) implies
\begin{equation}
y_k^T s_k \geq (c_2 - 1) \nabla \phi_k^T s_k > 0 
\end{equation}
so (\ref{eq:wolfe-condition-no-noise}) effectively enforces (\ref{eq:curvature-condition}) when no gradient noise is present. 

In the presence of noisy gradients, we argue that in general it no longer makes sense to enforce the Wolfe condition (\ref{eq:wolfe-condition-no-noise}). In the presence of gradient noise, (\ref{eq:wolfe-condition-no-noise}) becomes
\begin{equation} \label{eq:wolfe-condition-noise}
[ \nabla \phi_{k+1} + e_{k+1} ]^T p_k \geq c_2 [ \nabla \phi_k + e_k ]^T p_k 
\end{equation}
which can behave erratically once the noise vectors $e_{k+1}$ and $e_k$ start to dominate the gradient of $\phi$. For example, the noise vectors $e_{k+1}$ and $e_k$ can cause both sides of (\ref{eq:wolfe-condition-noise}) to erratically change sign, in which case whether or not the Wolfe condition is satisfied can be governed by randomness more than anything else. 

We argue that because the SP-BFGS update allows one to relax the curvature condition based on the value of $\beta_k$ as shown in the SP-BFGS curvature condition (\ref{eq:sp-bfgs-curv-cond}), it is appropriate to drop the Wolfe condition entirely in the presence of gradient noise and instead employ only a version of the sufficient decrease condition when choosing $\alpha_k$. In the situation where gradient noise is present but function noise is not (i.e. $f(x) = \phi(x)$ in (\ref{eq:noisy-function-decomposition})), one can use a backtracking line search based on the sufficient decrease condition, which can guarantee convergence to a neighborhood of a stationary point of $\phi$. The situation where noise is present in both function and gradient evaluations is trickier. Similar to the approach presented in Section 4.2 of \cite{DFONoisyFunctionsQuasiNewton}, one option is to use a backtracking line search with a relaxed sufficient decrease condition of the form
\begin{equation} \label{eq:armijo-condition-relaxed-noise}
f_{k+1} \leq f_k + c_1 \alpha_k g_k^T p_k + 2 \epsilon_A
\end{equation}
where $\epsilon_A \geq 0$ is a noise tolerance parameter and $p_k = - H_k g_k$. In Theorem 4.2 of \cite{DFONoisyFunctionsQuasiNewton}, the authors show that under Assumptions~\ref{assump:gradient-noise-bound} and~\ref{assump:function-noise-bound}, using the iteration (\ref{eq:line-search-iteration}) and a backtracking line search governed by the relaxed Armijo condition (\ref{eq:armijo-condition-relaxed-noise}) with $p_k = -g_k$ guarantees linear convergence to a neighborhood of the global minimizer for strongly convex functions. 

\begin{assumption}[Uniform Function Noise Bound] \label{assump:function-noise-bound}
There exists a nonnegative constant $\bar{\epsilon}_f \geq 0$ such that 
\begin{equation} \label{eq:uniform-function-noise-bound}
\abs{f(x) - \phi(x)} = \abs{\epsilon(x)} \leq \bar{\epsilon}_f, \qquad \forall x \in \mathbb{R}^n .
\end{equation} 
\end{assumption}

We agree with the authors of \cite{DFONoisyFunctionsQuasiNewton} that it is possible to prove an extension of Theorem 4.2 of \cite{DFONoisyFunctionsQuasiNewton} to a quasi-Newton iteration with positive definite $H_k$, and briefly outline why in Section~\ref{sec:fixed-alpha-convergence-analysis}. A quasi-Newton extension of Theorem 4.2 of \cite{DFONoisyFunctionsQuasiNewton} is relevant to SP-BFGS updating because, as we will formally see in Section~\ref{sec:beta-influence-on-H}, control of $\beta_k$ makes it possible to uniformly bound the minimum and maximum eigenvalues of $H_{k+1}$.

\subsection{Failed SP-BFGS Curvature Condition Recovery Procedure}
\label{sec:mod-curv-fail-recovery-procedure}
In the classical BFGS scenario where no gradient noise is present, the curvature condition (\ref{eq:curvature-condition}) may fail if $\alpha_k$ is not chosen based on the Armijo-Wolfe conditions and $\phi$ is not strongly convex. One of the most common strategies to handle this scenario is to skip the BFGS update (i.e. set $H_{k+1} = H_{k}$) when this occurs, which corresponds to an SP-BFGS update with $\beta_k = 0$. However, this simple strategy has the downside of potentially producing poor inverse Hessian approximations if updates are skipped too frequently. 

Conditionally skipping BFGS updates is an option in the presence of noisy gradients as well. In addition to skipping BFGS updates when (\ref{eq:curvature-condition}) fails, as described above, another course of action sometimes recommended in the presence of noise is to replace (\ref{eq:curvature-condition}) with
\begin{equation} \label{eq:curvature-condition-noise-modified-v1}
s_k^T y_k \geq \varepsilon \norm{s_k}_2^2
\end{equation}
where $\varepsilon > 0$ is a small positive constant, and skip the BFGS update if (\ref{eq:curvature-condition-noise-modified-v1}) is not satisfied. This strategy may be somewhat effective if $\norm{s_k}_2$ is large, but reduces back to the initial update skipping approach as $\norm{s_k}_2 \rightarrow 0$. A similar strategy (e.g. see Section 3.3.3 of \cite{DFONoisyFunctionsQuasiNewton}) is to replace (\ref{eq:curvature-condition}) with
\begin{equation} \label{eq:curvature-condition-noise-modified-v2}
s_k^T y_k \geq \zeta \norm{s_k}_2 \norm{y_k}_2
\end{equation}
and skip the BFGS update if (\ref{eq:curvature-condition-noise-modified-v2}) is not satisfied for some $\zeta \in (0,1)$. Notice that none of the aforementioned update skipping strategies allow for curvature information to be incorporated if the measured curvature $s_k^T y_k$ is negative. 

Unlike in the classical BFGS scenario, with SP-BFGS updating, curvature information can be incorporated even if the measured curvature $s_k^T y_k$ is negative by decreasing $\beta_k$ towards $0$. In addition to having the option of conditionally skipping updates (i.e. setting $\beta_k = 0$), one can also alternatively relax the SP-BFGS curvature condition by decreasing $\beta_k$ towards $0$ if (\ref{eq:sp-bfgs-curv-cond}) fails. Since $\beta_k$ is chosen after $s_k$ and $y_k$ are fixed in Algorithm~\ref{alg:SP-BFGS-routine}, one can solve for $\beta_k$ values satisfying (\ref{eq:sp-bfgs-curv-cond}) when $s_k^T y_k < 0$, yielding
\begin{equation} \label{eq:beta-curvature-fail-choice}
\beta_k = - \frac{1}{c_3 (s_k^T y_k)}
\end{equation}
for all $c_3 > 1$, assuming that $s_k \neq 0$ and $y_k \neq 0$. Note that if $s_k \neq 0$ and $y_k \neq 0$ and (\ref{eq:sp-bfgs-curv-cond}) fails, then the measured curvature $s_k^T y_k$ must be negative. The choice of $c_3$ determines how much to shrink $\beta_k$ compared to the largest value of $\beta_k$ that still satisfies (\ref{eq:sp-bfgs-curv-cond}) and thus guarantees the positive definiteness of $H_{k+1}$. Hence, if the value of $\beta_k$ produced by (\ref{eq:linear-beta-choice}) or (\ref{eq:linear-beta-choice-min-threshold}) is too large and (\ref{eq:sp-bfgs-curv-cond}) fails, one can choose an acceptable value of $\beta_k$ by using (\ref{eq:beta-curvature-fail-choice}) and selecting a $c_3 > 1$. Thus, instead of skipping the update (i.e. setting $\beta_k = 0$) if (\ref{eq:sp-bfgs-curv-cond}) fails, one can reduce $\beta_k$ towards $0$, which has the effect of reducing the magnitude of the update by increasing how much $H_{k+1}$ is biased towards $H_{k}$. An approach based on reducing $\beta_k$ towards $0$ never entirely skips incorporating measured curvature information, even if the measured curvature information is negative, but instead weights how heavily the measured curvature information affects $H_{k+1}$.

%==========================
\section{Convergence of SP-BFGS}
\label{sec:convergence-analysis}
%==========================
In this section, we discuss relevant theoretical and convergence properties of SP-BFGS. First, it is important to note that for specific choices of the sequence of penalty parameters $\beta_k$, known convergence results already exist. Specifically, if $\beta_k = +\infty$ for all $k$, then SP-BFGS updating is equivalent to BFGS updating. Although there are not many works on the convergence properties of BFGS updating in the presence of uniformly bounded noise, such as in Assumptions~\ref{assump:gradient-noise-bound} and~\ref{assump:function-noise-bound}, in \cite{doi:10.1137/19M1240794} the authors provide convergence results for a BFGS variant that employs an Armijo-Wolfe line search and lengthens the differencing interval in the presence of uniformly bounded function and gradient noise. At the other extreme, if $\beta_k = 0$ for all $k$, then one obtains a scaled gradient method for general $H^0 \succ 0$, and this becomes the gradient method when $H^0 = I$. Convergence analyses of the gradient method in the presence of uniformly bounded function and gradient noise for both a fixed step size and backtracking line search are provided in Section 4 of \cite{DFONoisyFunctionsQuasiNewton}. 

Given that perhaps the defining feature of SP-BFGS updating is the ability to vary $\beta_k$ at each iteration, we focus our attention on how varying $\beta_k$ can influence convergence behaviour in this section. As a result, most of the ensuing analysis centers around situations where the condition number of $H_k$ can be bounded. We do not employ the approach of bounding the cosine of the angle between the descent direction $p_k$ and the negative gradient above zero, and then showing that the condition number of $H_k$ is bounded, which is similar to the approaches taken when no noise is present in \cite{10.2307/2157646, 10.2307/2157680}, and when noise is present in \cite{doi:10.1137/19M1240794}. Although it may be possible to apply the strategies employed in \cite{10.2307/2157646, 10.2307/2157680, doi:10.1137/19M1240794} to establish convergence results for SP-BFGS, such an analysis is complicated enough that it is beyond the scope of this initial paper.

\subsection{The Influence of $\beta_k$ on $H_{k+1}$}
\label{sec:beta-influence-on-H}
We first examine how $\beta_k$ determines how much the maximum and minimum eigenvalues $\lambda_{max}(H_{k+1})$ and $\lambda_{min}(H_{k+1})$ can change. In what follows, $\lambda(H)$ denotes the set of eigenvalues $\lambda_1, \dots, \lambda_n$ of the matrix $H \in \mathbb{R}^{n \times n}$. We provide upper bounds on $\lambda_{max}(B_{k+1})$ and $\lambda_{max}(H_{k+1})$ via Theorem~\ref{thm:eigenvalue-bounds}. As $H_k = B_k^{-1}$, $1/\lambda_{min}(H_{k+1}) = \lambda_{max}(B_{k+1})$, and putting an upper bound on $\lambda_{max}(B_{k+1})$ is equivalent to putting a lower bound on $\lambda_{min}(H_{k+1})$. 
\begin{theorem}[Eigenvalue Upper Bounds] \label{thm:eigenvalue-bounds}
When $H_{k+1}$ is given by the SP-BFGS update (\ref{eq:SP-BFGS-Direct-Update-factored}), the following upper bounds (\ref{eq:H_k-upper-bound}) and (\ref{eq:B_k-upper-bound}) hold
\begin{equation} \label{eq:H_k-upper-bound}
\lambda_{max}(H_{k+1}) \leq \Tr(H_{k+1}) \leq \bigg [ \big ( 1 + \gamma_k \norm{y_k}_2 \norm{s_k}_2 \big )^2 \bigg ] \Tr(H_k) + \gamma_k \norm{s_k}_2^2 ,
\end{equation} 
\begin{equation} \label{eq:B_k-upper-bound}
\lambda_{max}(B_{k+1}) \leq \Tr(B_{k+1}) \leq \bigg [ 1 + \beta_k \norm{y_k}_2 \norm{s_k}_2 \bigg ] \Tr(B_k) + \gamma_k \norm{y_k}_2^2 .
\end{equation}
\end{theorem}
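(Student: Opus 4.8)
The plan is to prove each inequality chain by first reducing the eigenvalue bound to a trace bound and then estimating the trace of the relevant update. Since Lemma~\ref{thm:sp-bfgs-curv-cond} guarantees that $H_{k+1}$ (and hence $B_{k+1}=H_{k+1}^{-1}$) is positive definite whenever the SP-BFGS curvature condition holds, every eigenvalue of $H_{k+1}$ and $B_{k+1}$ is positive, so the largest eigenvalue is dominated by the sum of eigenvalues; this gives $\lambda_{max}(H_{k+1})\leq\Tr(H_{k+1})$ and $\lambda_{max}(B_{k+1})\leq\Tr(B_{k+1})$ immediately. It then remains to bound the two traces.

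For (\ref{eq:H_k-upper-bound}) I would take the trace of the factored update (\ref{eq:SP-BFGS-Direct-Update-factored}) directly. Expanding $(I-\omega_k s_k y_k^T)H_k(I-\omega_k y_k s_k^T)$ and using $\Tr(H_k y_k s_k^T)=\Tr(s_k y_k^T H_k)=s_k^T H_k y_k$ together with $\Tr(s_k s_k^T)=\norm{s_k}_2^2$, the trace collapses to $\Tr(H_k)-2\omega_k s_k^T H_k y_k+\omega_k\gamma_k(y_k^T H_k y_k)\norm{s_k}_2^2+\gamma_k\norm{s_k}_2^2$; the key simplification is that the coefficient of $(y_k^T H_k y_k)\norm{s_k}_2^2$ reduces from $\omega_k^2+\omega_k(\gamma_k-\omega_k)$ to $\omega_k\gamma_k$. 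After peeling off the additive $\gamma_k\norm{s_k}_2^2$, I would bound the remaining three terms by $(1+\gamma_k\norm{y_k}_2\norm{s_k}_2)^2\Tr(H_k)$: Cauchy--Schwarz gives $\abs{s_k^T H_k y_k}\leq\lambda_{max}(H_k)\norm{s_k}_2\norm{y_k}_2$ and $y_k^T H_k y_k\leq\lambda_{max}(H_k)\norm{y_k}_2^2$, and combining these with $\lambda_{max}(H_k)\leq\Tr(H_k)$ and $\omega_k\leq\gamma_k$ (which holds because $\beta_k>0$ forces $s_k^T y_k+\tfrac{2}{\beta_k}>s_k^T y_k+\tfrac{1}{\beta_k}>0$ under the curvature condition) reproduces exactly the expansion of the squared factor.

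For (\ref{eq:B_k-upper-bound}) the cleanest route is to produce a factored expression for $B_{k+1}$ by applying the Sherman--Morrison--Woodbury formula to $H_{k+1}=A H_k A^T+c\,s_k s_k^T$, where $A=I-\omega_k s_k y_k^T$ and $c=\gamma_k+\omega_k(\gamma_k-\omega_k)y_k^T H_k y_k\geq\gamma_k$; this writes $B_{k+1}=E\tilde B E^T$ for a rank-one perturbation $E$ of the identity and a rank-one downdate $\tilde B\preceq B_k$ of $B_k$ (alternatively one can take the trace of the explicit inverse update of Theorem~\ref{thm:sp-bfgs-inverse-update} directly). Expanding the trace and discarding the nonpositive subtracted term leaves $\Tr(B_k)$ plus a cross term proportional to $s_k^T B_k y_k$ plus a term proportional to $\norm{y_k}_2^2$. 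For the cross term I would use $\abs{s_k^T B_k y_k}\leq\lambda_{max}(B_k)\norm{s_k}_2\norm{y_k}_2\leq\Tr(B_k)\norm{s_k}_2\norm{y_k}_2$ and show its scalar prefactor $\tfrac{2\omega_k(1-\omega_k s_k^T y_k)}{(1-\omega_k s_k^T y_k)^2+c\,s_k^T B_k s_k}\leq\tfrac{2\omega_k}{1-\omega_k s_k^T y_k}=\beta_k$, where the last equality is the exact identity following from (\ref{eq:gamma-omega-definitions}); for the $\norm{y_k}_2^2$ term I would instead exploit the self-bounding structure of the common denominator, bounding its prefactor $\tfrac{\omega_k^2 s_k^T B_k s_k}{(1-\omega_k s_k^T y_k)^2+c\,s_k^T B_k s_k}\leq\tfrac{\omega_k^2}{c}\leq\gamma_k$ via $\omega_k\leq\gamma_k\leq c$, without any further estimate of $s_k^T B_k s_k$.

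The main obstacle is the $B_{k+1}$ bound. Unlike $H_{k+1}$, the approximate Hessian has no clean one-line factored form, and several of the scalar factors appearing in its trace diverge in the BFGS limit $\beta_k\to+\infty$ while only their products stay finite, so a naive term-by-term operator-norm estimate produces a spurious squared coefficient rather than the claimed linear factor $1+\beta_k\norm{y_k}_2\norm{s_k}_2$. Obtaining the correct bound hinges on the right regrouping of these scalars, on establishing the identity $\tfrac{2\omega_k}{1-\omega_k s_k^T y_k}=\beta_k$, and on tracking the sign of $1-\omega_k s_k^T y_k$, which stays strictly positive precisely because the SP-BFGS curvature condition (\ref{eq:sp-bfgs-curv-cond}) holds.
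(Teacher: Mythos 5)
Your proposal is correct and follows essentially the same route as the paper's proof: reduce $\lambda_{max}$ to the trace via positive definiteness, take the trace of the factored update for $H_{k+1}$ (with the same $\omega_k^2+\omega_k(\gamma_k-\omega_k)=\omega_k\gamma_k$ collapse) and of the explicit inverse update for $B_{k+1}$, discard the nonpositive $\norm{B_k s_k}_2^2$ term, and bound the remaining scalar prefactors by $\beta_k$ and $\gamma_k$ using the identity $1-\omega_k s_k^T y_k = 2\omega_k/\beta_k$ together with $\omega_k\leq\gamma_k$ and Cauchy--Schwarz. Your prefactor estimates match the paper's $\kappa_3\leq\beta_k/2$ and $\kappa_4\leq\gamma_k$ exactly.
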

\begin{proof}
See Appendix~\ref{app:eigenvalue-bounds-proof}.
\smartqed
\end{proof}

With Theorem~\ref{thm:eigenvalue-bounds} in hand, we now formally see that when $s_k^T y_k > 0$, as $\beta_k$ increases from $0$ to $+\infty$, an upper bound on $\lambda_{max}(H_{k+1})$ interpolates between $\Tr(H_{k})$ and $+\infty$, and an upper bound on $\lambda_{max}(B_{k+1})$ interpolates between $\Tr(B_{k})$ and $+\infty$. Similarly, when $s_k^T y_k < 0$, as $\beta_k$ increases from $0$ towards $- \frac{1}{(s_k^T y_k)}$, an upper bound on $\lambda_{max}(H_{k+1})$ interpolates between between $\Tr(H_{k})$ and $+\infty$, and an upper bound on $\lambda_{max}(B_{k+1})$ interpolates between $\Tr(B_{k})$ and $+\infty$. Standard BFGS updating corresponds to setting $\beta_k = +\infty$ for all $k$, and as this is the largest possible value of $\beta_k$, one can no longer formally guarantee that $\lambda_{max}(B_{k+1})$ and $\lambda_{max}(H_{k+1})$ are bounded from above at each iteration because the measured curvature $s_k^T y_k$ may become arbitrarily close to zero due to the effects of noise. The key takeaway is that upper bounds on $\lambda_{max}(H_{k+1})$ and $\lambda_{max}(B_{k+1})$ can be tightened arbitrarily close to $\Tr(H_k)$ and $\Tr(B_k)$ by shrinking $\beta_k$ towards zero, as $s_k$, $y_k$, and $H_k$ are fixed before the value of $\beta_k$ is chosen in Algorithm~\ref{alg:SP-BFGS-routine}. 

Thus, if one must enforce a bound of the form $\lambda_{max}(H_{k+1}) \leq C_{H}$ or a bound of the form $\lambda_{max}(B_{k+1}) \leq C_{B}$ for all $k \geq 0$, where $C_{H} > \Tr(H_0) > 0$ and $C_{B} > \Tr(B_0) > 0$ are positive constants, there exist nontrivial sequences of sufficiently small $\beta_k$ with $\lim_{k \rightarrow \infty} \beta_k = 0$ that ensure the bounds hold for all $k$. To see this, observe that the interval $[\lambda_{max}(H_0), C_{H}]$ can be partitioned into subintervals corresponding to each iteration, and the sum of the subintervals cannot exceed $C_{H} - \lambda_{max}(H_0)$, which can be guaranteed by assigning a small enough value of $\beta_k$ to each subinterval, as this guarantees the maximum eigenvalue does not grow too much at each iteration $k$. Furthermore, although there clearly exist sequences of $\beta_k$ that ensure the bounds hold for all $k$ that satisfy $\beta_k = 0$ for all $k \geq K$, where $K$ is a positive integer, there also exist sequences of $\beta_k$ that ensure the bounds hold for all $k$ where $\beta_k$ instead only approaches zero in the limit $k \rightarrow \infty$.

\subsection{Minimization Of Strongly Convex Functions}
\label{sec:fixed-alpha-convergence-analysis}
Having established that SP-BFGS iterations can maintain bounds on the maximum and minimum eigenvalues of the approximate inverse Hessians via sufficiently small choices of $\beta_k$, we now consider minimizing strongly convex functions in the presence of bounded noise. We introduce Assumption~\ref{assump:strong-convexity}, and the notation $x^{\star}$ to denote the argument of the unique minimum of $\phi$, and  $\phi^{\star} = \phi(x^{\star})$ to denote the minimum. 
\begin{assumption}[Strong Convexity of $\phi$] \label{assump:strong-convexity}
The function $\phi \in C^2$ is twice continuously differentiable and there exist positive constants $0 < m \leq M$ such that 
\begin{equation} \label{eq:strongly-convex-hessian}
m I \preceq \nabla^2 \phi(x) \preceq M I, \qquad \forall x \in \mathbb{R}^n .
\end{equation}
\end{assumption}

We also state a general result in Lemma~\ref{lemma:noise-dominated-region} that establishes a region where $H_k g_k$ may not provide a descent direction with respect to $\phi$ due to noise dominating gradient measurements. Outside of this region, $H_k g_k$ is guaranteed to provide a descent direction for $\phi$. 
\begin{lemma}[Region Where Gradient Noise Can Dominate $\nabla \phi$] \label{lemma:noise-dominated-region}
Suppose Assumptions~\ref{assump:gradient-noise-bound} and~\ref{assump:strong-convexity}, and the decomposition in (\ref{eq:noisy-gradient-decomposition}) apply. Let $H$ be a symmetric positive definite matrix bounded by $\psi I \preceq H \preceq \Psi I$, where $0 < \psi \leq \Psi$. Define the neighborhood $\mathcal{N}_{1}(\psi,\Psi)$ as 
\begin{equation} \label{eq:noise-region-function}
\mathcal{N}_{1}(\psi,\Psi) \equiv \bigg \{ x \text{   } \big | \text{   } \phi(x) \leq \phi^{\star} + \frac{1}{2 m} \bigg ( \frac{\Psi \bar{\epsilon}_g}{\psi} \bigg )^2 \bigg \} .
\end{equation}
For all $x \notin \mathcal{N}_{1}$, $\nabla \phi(x)^T H g(x) > 0$. Contrapositively, for all $x$ such that $\nabla \phi(x)^T H g(x) \leq 0$, $x \in \mathcal{N}_{1}$. 
\end{lemma}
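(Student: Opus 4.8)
The plan is to expand $\nabla\phi(x)^T H g(x)$ via the decomposition $g(x) = \nabla\phi(x) + e(x)$ from (\ref{eq:noisy-gradient-decomposition}), splitting it into a deterministic ``signal'' term and a ``noise'' term:
\begin{equation*}
\nabla\phi(x)^T H g(x) = \nabla\phi(x)^T H \nabla\phi(x) + \nabla\phi(x)^T H e(x).
\end{equation*}
Using $\psi I \preceq H \preceq \Psi I$, I would bound the signal term below by $\nabla\phi(x)^T H \nabla\phi(x) \ge \psi \norm{\nabla\phi(x)}_2^2$, and bound the noise term below using the Cauchy--Schwarz inequality together with $\norm{H}_2 \le \Psi$ and Assumption~\ref{assump:gradient-noise-bound}, giving $\nabla\phi(x)^T H e(x) \ge - \Psi \bar{\epsilon}_g \norm{\nabla\phi(x)}_2$. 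Combining these yields
\begin{equation*}
\nabla\phi(x)^T H g(x) \ge \norm{\nabla\phi(x)}_2 \big ( \psi \norm{\nabla\phi(x)}_2 - \Psi \bar{\epsilon}_g \big ),
\end{equation*}
so the inner product is strictly positive whenever $\norm{\nabla\phi(x)}_2 > \Psi \bar{\epsilon}_g / \psi$.

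The remaining step is to certify that this gradient lower bound holds for every $x \notin \mathcal{N}_1$. Here I would invoke the strong convexity of Assumption~\ref{assump:strong-convexity}: minimizing the quadratic lower model $\phi(y) \ge \phi(x) + \nabla\phi(x)^T (y - x) + \frac{m}{2} \norm{y - x}_2^2$ over $y$ and comparing with the global minimizer gives the standard gradient domination bound $\norm{\nabla\phi(x)}_2^2 \ge 2m \big ( \phi(x) - \phi^\star \big )$. For $x \notin \mathcal{N}_1$ the defining inequality of (\ref{eq:noise-region-function}) is violated, i.e. $\phi(x) - \phi^\star > \frac{1}{2m} ( \Psi \bar{\epsilon}_g / \psi )^2$, and substituting into the gradient bound yields $\norm{\nabla\phi(x)}_2^2 > ( \Psi \bar{\epsilon}_g / \psi )^2$, which is exactly the threshold required above.

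Chaining the two parts establishes $\nabla\phi(x)^T H g(x) > 0$ for all $x \notin \mathcal{N}_1$, and the contrapositive formulation in the statement follows immediately. I do not anticipate a genuine obstacle, as the argument is a short assembly of standard inequalities. The one point demanding care is the direction of the strong-convexity estimate -- one must use the \emph{lower} quadratic model to relate the suboptimality $\phi(x) - \phi^\star$ to $\norm{\nabla\phi(x)}_2$ -- together with the bookkeeping of the constants $m$, $\psi$, $\Psi$, and $\bar{\epsilon}_g$, so that the neighborhood radius appearing in (\ref{eq:noise-region-function}) matches precisely the gradient threshold produced by the eigenvalue bounds on $H$.
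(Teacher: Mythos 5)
Your proposal is correct and follows essentially the same route as the paper's proof: decompose $\nabla\phi(x)^T H g(x)$ into signal and noise terms, bound them with the eigenvalue bounds on $H$ together with Cauchy--Schwarz and the gradient noise bound, and then invoke the strong-convexity (Polyak--{\L}ojasiewicz) inequality $\norm{\nabla\phi(x)}_2^2 \geq 2m(\phi(x)-\phi^{\star})$ to translate the gradient threshold into the sublevel set $\mathcal{N}_1$. The only cosmetic difference is that you argue the direct implication while the paper states the middle step contrapositively; the content is identical.
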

\begin{proof}
See Appendix~\ref{app:noise-dominated-region-proof}.
\smartqed
\end{proof}

Applying Lemma~\ref{lemma:noise-dominated-region} in the context of SP-BFGS updating makes several convergence properties clear. First, if one chooses $\beta_k$ such that $\psi I \preceq H_k \preceq \Psi I$ for all $k$ (i.e. the eigenvalues of the approximate inverse Hessian are uniformly bounded from above and below for all $k$), by Lemma~\ref{lemma:noise-dominated-region} it becomes clear that in the presence of gradient noise and absence of function noise (i.e. $f(x) = \phi(x)$ in (\ref{eq:noisy-function-decomposition})), the iterates of SP-BFGS with a backtracking line search based on (\ref{eq:armijo-condition-relaxed-noise}) with $\epsilon_{A} = 0$ in the worst case approach $\mathcal{N}_{1}$ as $k \rightarrow \infty$. To see this, observe that $H_k g_k$ is guaranteed to provide a descent direction outside of $\mathcal{N}_{1}$ and the sufficient decrease condition guarantees that $\alpha_k$ is not too large, while backtracking guarantees that $\alpha_k$ is not too small. For more background, see Chapter 3 of \cite{Nocedal2006}. 

Second, if both function and gradient noise are present, and one again chooses $\beta_k$ such that the bounds $\psi I \preceq H_k \preceq \Psi I$ hold for all $k$, under additional conditions a worst case analysis in Theorem~\ref{thm:fixed-alpha-linear-convergence} shows that an approach using a sufficiently small fixed step size $\alpha$ approaches $\mathcal{N}_{1}$ at a linear rate as $k \rightarrow \infty$. For a general quasi-Newton iteration of the form
\begin{equation} \label{eq:fixed-alpha-iteration}
x_{k+1} = x_{k} - \alpha H_k g_k 
\end{equation}
with constant step size $\alpha$ and $H_k \succ 0$, Theorem~\ref{thm:fixed-alpha-linear-convergence} establishes linear convergence to the region where noise can dominate $\nabla \phi$ (i.e. $\mathcal{N}_{1}$ in Lemma~\ref{lemma:noise-dominated-region}).
\begin{theorem}[Linear Convergence For Sufficiently Small Fixed $\alpha$] \label{thm:fixed-alpha-linear-convergence}
Suppose that Assumptions~\ref{assump:gradient-noise-bound} and~\ref{assump:strong-convexity} hold. Further suppose that $H_k$ is symmetric positive definite and bounded by $\psi I \preceq H_k \preceq \Psi I$, where $0 < \psi \leq \Psi$. Let $\psi$ be such that the inequality
\begin{equation} \label{eq:inner-product-proportional}
\nabla \phi_k^T H_k g_k \geq \psi \nabla \phi_k^T g_k
\end{equation}
is true for all $k$.
Let $\{ x_k \}$ be the iterates generated by~(\ref{eq:fixed-alpha-iteration}), where the constant step size $\alpha$ satisfies
\begin{equation} \label{eq:alpha-fixed-size-lower-upper-bound}
0 < \alpha \leq \frac{\psi}{\Psi^2 M} .
\end{equation}
Then for all $k$ such that $x_k \notin \mathcal{N}_{1}(\psi, \Psi)$, one has the Q-linear convergence result
\begin{equation} \label{eq:fixed-alpha-Q-linear-rate}
\phi_{k+1} - \bigg [ \phi^{\star} + \frac{1}{2 m} \bigg ( \frac{\Psi \bar{\epsilon}_g}{\psi} \bigg )^2 \bigg ] \leq (1 - \alpha \psi m) \bigg ( \phi_k - \bigg [ \phi^{\star} + \frac{1}{2 m} \bigg ( \frac{\Psi \bar{\epsilon}_g}{\psi} \bigg )^2 \bigg ] \bigg ) .
\end{equation}
Similarly, for any $x_0 \notin \mathcal{N}_{1}(\psi,\Psi)$, one has the R-linear convergence result
\begin{equation} \label{eq:fixed-alpha-R-linear-rate}
\phi_{k+1} - \phi^{\star} \leq (1 - \alpha \psi m)^k \bigg ( \phi_0 - \bigg [ \phi^{\star} + \frac{1}{2 m} \bigg ( \frac{\Psi \bar{\epsilon}_g}{\psi} \bigg )^2 \bigg ] \bigg ) + \frac{1}{2 m} \bigg ( \frac{\Psi \bar{\epsilon}_g}{\psi} \bigg )^2 . 
\end{equation}
\end{theorem}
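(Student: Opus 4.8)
The plan is to start from the quadratic upper bound furnished by $M$-smoothness, control the two resulting terms using the eigenvalue bounds on $H_k$ together with the hypothesis (\ref{eq:inner-product-proportional}) and the step-size restriction (\ref{eq:alpha-fixed-size-lower-upper-bound}), and then convert the resulting bound on $\norm{\nabla\phi_k}_2^2$ into a bound on the optimality gap via the strong-convexity (Polyak--{\L}ojasiewicz-type) inequality. First I would apply the upper bound $\nabla^2\phi \preceq MI$ from Assumption~\ref{assump:strong-convexity} to the update (\ref{eq:fixed-alpha-iteration}) to obtain
\[ \phi_{k+1} \leq \phi_k - \alpha\,\nabla\phi_k^T H_k g_k + \frac{M\alpha^2}{2}\norm{H_k g_k}_2^2 . \]
I would then bound each piece: the hypothesis (\ref{eq:inner-product-proportional}) gives $-\alpha\,\nabla\phi_k^T H_k g_k \le -\alpha\psi\,\nabla\phi_k^T g_k$, and $H_k \preceq \Psi I$ gives $\norm{H_k g_k}_2^2 \le \Psi^2\norm{g_k}_2^2$. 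The step-size bound (\ref{eq:alpha-fixed-size-lower-upper-bound}) is exactly $M\alpha\Psi^2 \le \psi$, so the quadratic coefficient collapses to $\frac{M\alpha^2\Psi^2}{2} \le \frac{\alpha\psi}{2}$, leaving
\[ \phi_{k+1} - \phi_k \le \alpha\psi\Big(-\nabla\phi_k^T g_k + \tfrac12\norm{g_k}_2^2\Big) . \]

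The step I expect to be the crux is a completing-the-square identity. Substituting the noise decomposition (\ref{eq:noisy-gradient-decomposition}), $g_k = \nabla\phi_k + e_k$, I would verify that
\[ -\nabla\phi_k^T g_k + \tfrac12\norm{g_k}_2^2 = -\tfrac12\norm{\nabla\phi_k}_2^2 + \tfrac12\norm{e_k}_2^2 , \]
which cleanly separates the true-gradient descent from the noise. Applying Assumption~\ref{assump:gradient-noise-bound} to replace $\norm{e_k}_2^2$ by $\bar\epsilon_g^2$ yields
\[ \phi_{k+1} - \phi_k \le -\frac{\alpha\psi}{2}\norm{\nabla\phi_k}_2^2 + \frac{\alpha\psi}{2}\bar\epsilon_g^2 . \]
I would then invoke the strong-convexity inequality $\norm{\nabla\phi_k}_2^2 \ge 2m(\phi_k - \phi^\star)$, which follows from $mI \preceq \nabla^2\phi$ in Assumption~\ref{assump:strong-convexity}, and absorb the noise floor using $\tfrac{\alpha\psi}{2}\bar\epsilon_g^2 \le \alpha\psi m\cdot\tfrac{1}{2m}\big(\tfrac{\Psi\bar\epsilon_g}{\psi}\big)^2$ (valid since $\psi \le \Psi$). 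Writing $\phi_{\mathcal N} = \phi^\star + \tfrac{1}{2m}\big(\tfrac{\Psi\bar\epsilon_g}{\psi}\big)^2$ and rearranging then produces $\phi_{k+1} - \phi_{\mathcal N} \le (1-\alpha\psi m)(\phi_k - \phi_{\mathcal N})$, which for $x_k \notin \mathcal N_1(\psi,\Psi)$ is precisely the Q-linear statement (\ref{eq:fixed-alpha-Q-linear-rate}).

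For the R-linear result (\ref{eq:fixed-alpha-R-linear-rate}) I would first record that $0 \le 1-\alpha\psi m < 1$: the bound (\ref{eq:alpha-fixed-size-lower-upper-bound}) together with $\psi \le \Psi$ and $m \le M$ gives $\alpha\psi m \le \psi^2 m/(\Psi^2 M) \le 1$. Crucially, none of the estimates above used $x_k \notin \mathcal N_1$, so the one-step contraction $\phi_{k+1} - \phi_{\mathcal N} \le (1-\alpha\psi m)(\phi_k - \phi_{\mathcal N})$ holds for every $k$; iterating it from $0$ to $k$ gives $\phi_{k+1} - \phi_{\mathcal N} \le (1-\alpha\psi m)^{k+1}(\phi_0 - \phi_{\mathcal N})$. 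Adding $\tfrac{1}{2m}\big(\tfrac{\Psi\bar\epsilon_g}{\psi}\big)^2$ back to both sides and using $x_0 \notin \mathcal N_1$ (so $\phi_0 - \phi_{\mathcal N} > 0$) together with $(1-\alpha\psi m)^{k+1} \le (1-\alpha\psi m)^k$ delivers (\ref{eq:fixed-alpha-R-linear-rate}). The genuine difficulty is not any individual estimate but recognizing the completing-the-square identity that isolates the noise contribution; once that is in hand, the strong-convexity lower bound and the telescoping are routine.
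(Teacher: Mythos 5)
Your proof is correct and follows the same overall architecture as the paper's: the quadratic upper bound from $\nabla^2\phi \preceq MI$, the use of (\ref{eq:inner-product-proportional}) and $H_k \preceq \Psi I$ with the step-size restriction to reach $\phi_{k+1} \leq \phi_k - \frac{\alpha\psi}{2}\norm{\nabla\phi_k}_2^2 + \frac{\alpha\psi}{2}\norm{e_k}_2^2$, then the strong-convexity (PL) lower bound and the algebraic rearrangement around $\phi^{\star} + \frac{1}{2m}(\Psi\bar{\epsilon}_g/\psi)^2$. The one genuine difference is how you handle the noise cross-term: the paper keeps the coefficient $\frac{\alpha^2\Psi^2M}{2}$ intact, bounds $-\nabla\phi_k^T e_k$ by Cauchy--Schwarz, and then applies Young's inequality $\norm{\nabla\phi_k}_2\norm{e_k}_2 \leq \frac{1}{2}\norm{\nabla\phi_k}_2^2 + \frac{1}{2}\norm{e_k}_2^2$, whereas you first collapse the quadratic coefficient to $\frac{\alpha\psi}{2}$ via the step-size bound and then exploit the exact identity $-\nabla\phi_k^T g_k + \frac{1}{2}\norm{g_k}_2^2 = -\frac{1}{2}\norm{\nabla\phi_k}_2^2 + \frac{1}{2}\norm{e_k}_2^2$; both routes land on the identical intermediate bound (\ref{eq:simplified-worst-case-descent-bound}), but yours avoids two inequalities. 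Your handling of the R-linear result is also slightly tighter than the paper's: you correctly observe that none of the estimates actually require $x_k \notin \mathcal{N}_1$ (the paper invokes this membership only to assert $\nabla\phi_k^T g_k > 0$, which is never used in the subsequent chain), so the one-step contraction holds for every $k$ and can be iterated without the paper's informal ``in the worst case the iterates remain outside $\mathcal{N}_1$'' justification; combined with $\phi_0 > \phi^{\star} + \frac{1}{2m}(\Psi\bar{\epsilon}_g/\psi)^2$ and $(1-\alpha\psi m)^{k+1} \leq (1-\alpha\psi m)^k$, this rigorously delivers (\ref{eq:fixed-alpha-R-linear-rate}).
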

\begin{proof}
See Appendix~\ref{app:fixed-alpha-linear-convergence-proof}.
\smartqed
\end{proof}

Theorem~\ref{thm:fixed-alpha-linear-convergence} can be considered a quasi-Newton extension of Theorem 4.1 from \cite{DFONoisyFunctionsQuasiNewton}, which lays the foundation for Theorem 4.2 from \cite{DFONoisyFunctionsQuasiNewton}. To extend the convergence result of Theorem~\ref{thm:fixed-alpha-linear-convergence} to the backtracking line search approach based on (\ref{eq:armijo-condition-relaxed-noise}), see that (\ref{eq:alpha-fixed-size-lower-upper-bound}), (\ref{eq:simplified-worst-case-descent-bound}), and Assumption~\ref{assump:function-noise-bound} combined imply 
\begin{equation} \label{eq:armijo-condition-relaxed-noise-theorem-4-bounded}
f(x_k - \alpha H_k g_k) \leq f(x_k) - \frac{\alpha \psi}{2} \bigg ( \norm{\nabla \phi_k}_2^2 - \norm{e_k}_2^2 \bigg ) + 2 \bar{\epsilon}_f
\end{equation}
and so if $\epsilon_A > \bar{\epsilon}_f$, comparing (\ref{eq:armijo-condition-relaxed-noise}) and (\ref{eq:armijo-condition-relaxed-noise-theorem-4-bounded}) makes it clear that (\ref{eq:armijo-condition-relaxed-noise}) will be satisfied for sufficiently small $\alpha$. Hence, the backtracking line search always finds an $\alpha_k$ satisfying (\ref{eq:armijo-condition-relaxed-noise}). For brevity, we defer a full, rigorous quasi-Newton extension of Theorem 4.2 of \cite{DFONoisyFunctionsQuasiNewton} to future work and instead investigate the performance of an approach based on (\ref{eq:armijo-condition-relaxed-noise}) via numerical experiments in Section~\ref{sec:numerical-experiments}.

%==========================
\section{Numerical Experiments}
\label{sec:numerical-experiments}
%==========================
In this section, we test instances of Algorithm~\ref{alg:SP-BFGS-routine} on a diverse set of 33 test problems for unconstrained minimization. The set of test problems includes convex and nonconvex functions, and well known pathological functions such as the Rosenbrock function \cite{10.1093/comjnl/3.3.175} and its relatives. Described in Section~\ref{sec:ill-conditioned-quadratic-test}, the first test problem is similar to the one used in the numerical experiments section of \cite{doi:10.1137/19M1240794}, and involves an ill conditioned quadratic function. The other 32 problems are selected problems from the CUTEst test problem set \cite{gould-orban-cutest}, and are used for tests in Section~\ref{sec:CUTEst-tests}. Code for running these numerical experiments was written in the Julia programming language \cite{doi:10.1137/141000671}, and utilizes the NLPModels.jl \cite{orban-siqueira-nlpmodels-2020}, CUTEst.jl \cite{orban-siqueira-cutest-2020}, and Distributions.jl \cite{2019arXiv190708611B, Distributions.jl-2019} packages. In all the numerical experiments that follow, noise $\epsilon(x)$ was added to function evaluations by uniformly sampling from the interval $[-\bar{\epsilon}_f,\bar{\epsilon}_f]$, and noise $e(x)$ was added to the gradient evaluations by uniformly sampling from the closed Euclidean ball $\norm{x}_2 \leq \bar{\epsilon}_g$.

\subsection{Ill Conditioned Quadratic Function With Additive Gradient Noise Only}
\label{sec:ill-conditioned-quadratic-test}
The first test problem is strongly convex and consists of the $4$-dimensional quadratic function given by
\begin{equation} \label{eq:quadratic-test-problem-smooth}
\phi(x) = \frac{1}{2} x^T T x
\end{equation}
where the eigenvalues of $T$ are $\lambda(T) = \{ 10^{-2}, 1, 10^{2}, 10^{4} \}$. Consequently, the strong convexity parameter is $m = 10^{-2}$, the Lipschitz constant is $M = 10^{4}$, and the condition number of the Hessian $T$ is $10^{6}$. For this test problem, no noise was added to the function evaluations (i.e. $f(x) = \phi(x)$ in (\ref{eq:noisy-function-decomposition})), and $\bar{\epsilon}_g = 1$. As a result, in this scenario $\mathcal{N}_1$ from Lemma~\ref{lemma:noise-dominated-region} with $\psi = \Psi = 1$ (i.e. the smallest possible $\mathcal{N}_1$) becomes
\begin{equation}
\mathcal{N}_1(1,1) = \bigg \{ x \text{   } \big | \text{   } \phi(x) \leq 50 \bigg \} .
\end{equation}

Following the discussion in Section~\ref{sec:choice-of-penalty-parameter}, we set the penalty parameters via the formula $\beta_k = \frac{1}{\bar{\epsilon}_g} \norm{s_k}_2 + 10^{-10}$, which corresponds to a choice of $N_s = 1$ in (\ref{eq:linear-beta-choice}). The $10^{-10}$ term was added as a small perturbation to provide numerical stability. The step size $\alpha_k$ was chosen using a backtracking line search based on the sufficient decrease condition (\ref{eq:armijo-condition-relaxed-noise}) with $p_k = - H_k g_k$, where $g_k$ is defined by (\ref{eq:noisy-gradient-decomposition}), $\epsilon_{A} = 0$, and $c_1 = 10^{-4}$. At each iteration, backtracking started from the initial step size $\alpha^{0} = 1$, decreasing by a factor of $\tau = 1/2$ each time the sufficient decrease condition failed. If the backtracking line search exceeded the maximum number of $75$ backtracks, we set $\alpha_k = 0$. However, the maximum number of backtracks was never exceeded when performing experiments with this first test problem. 

Algorithm~\ref{alg:SP-BFGS-routine} was initialized using the matrix and starting point
\begin{equation} \label{eq:initial-matrix-and-starting-point}
H^{0} = I, \qquad x^{0} = 10^{5} \cdot [1, 1, 1, 1]^T 
\end{equation}
given in (\ref{eq:initial-matrix-and-starting-point}), with $\norm{\nabla \phi(x^0)}_2 \approx 10^{9}$. Figures \ref{fig:optimality-gap}, \ref{fig:gradient-norm}, and \ref{fig:condition-number} compare the performance of $30$ independent runs of SP-BFGS vs. BFGS over a fixed budget of $100$ iterations. The relevant curvature condition failed an average of $25.7$ total iterations per BFGS run, and $0.6$ total iterations per SP-BFGS run. For the sake of comparability, both SP-BFGS and BFGS skipped the update if the relevant curvature condition failed. Observe that SP-BFGS reduces the objective function value by several more orders of magnitude compared to BFGS on average, and maintains significantly better inverse Hessian approximations than BFGS in the presence of gradient noise. 

\newpage
\begin{figure}[H]%
\centering
\begin{minipage}{\textwidth}
\centering
\begin{tabular}{cc}
\includegraphics[width=5.7cm]{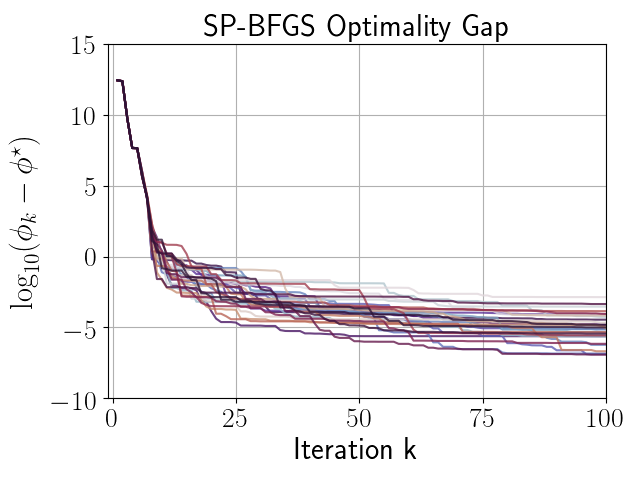} &
\includegraphics[width=5.7cm]{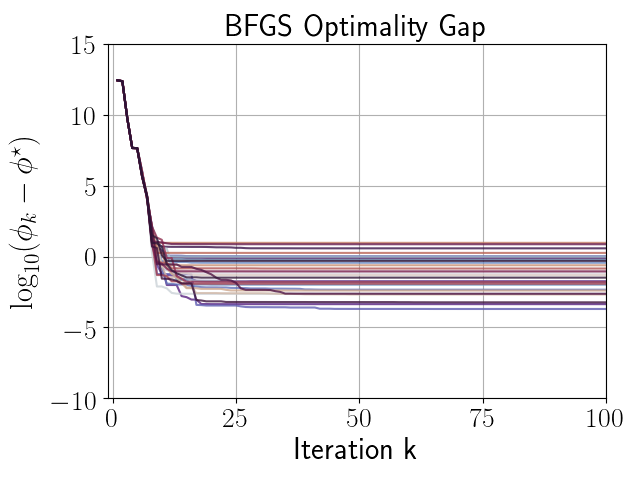} \\
\end{tabular}
\caption{Base 10 logarithm of the optimality gap vs. the iteration number $k$ for $30$ independent runs. After $100$ iterations, SP-BFGS has an average $\log_{10}(\phi_{100} - \phi^{\star})$ of $-5.03$ while BFGS has an average $\log_{10}(\phi_{100} - \phi^{\star})$ of $-1.27$. Observe that both SP-BFGS and BFGS appear to enter $\mathcal{N}_1(1,1)$, which corresponds to values less than $\log_{10}(50) \approx 1.7$ on the y-axis, but SP-BFGS makes more progress inside $\mathcal{N}_1(1,1)$. Outside of $\mathcal{N}_1(1,1)$, the performance of SP-BFGS and BFGS is almost indistinguishable. }
\label{fig:optimality-gap}
\end{minipage}%
\\
\bigskip
\begin{minipage}{\textwidth}
\centering
\begin{tabular}{cc}
\includegraphics[width=5.7cm]{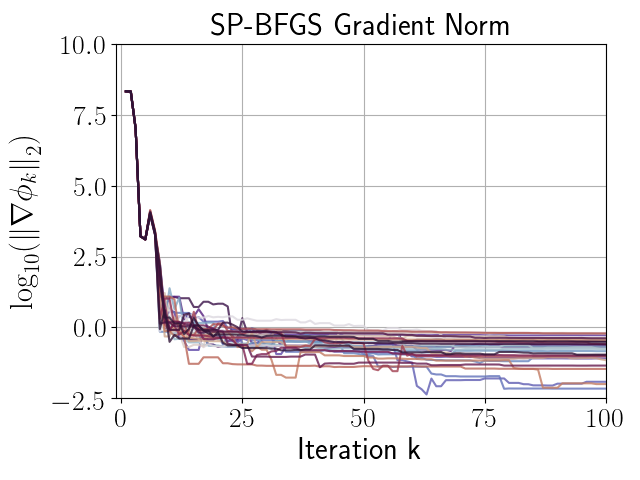} &
\includegraphics[width=5.7cm]{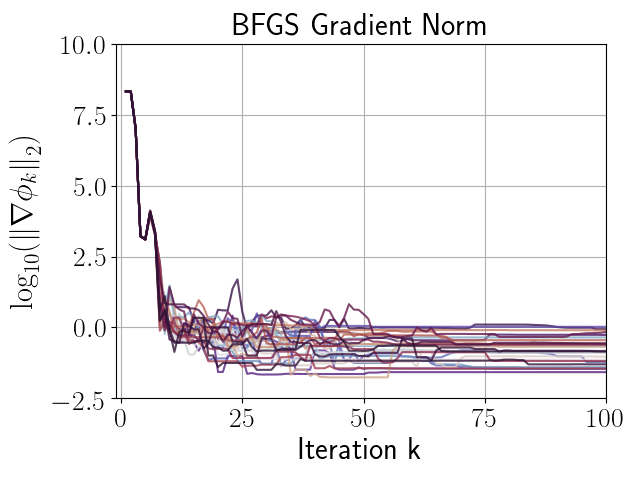} \\
\end{tabular}
\caption{Base 10 logarithm of the Euclidean norm of the true gradient $\nabla \phi_k$ vs. the iteration number $k$ for $30$ independent runs. Note that the BFGS values appear to vary more wildly than the SP-BFGS values. }
\label{fig:gradient-norm}
\end{minipage}%
\\
\bigskip
\begin{minipage}{\textwidth}
\centering
\begin{tabular}{cc}
\includegraphics[width=5.7cm]{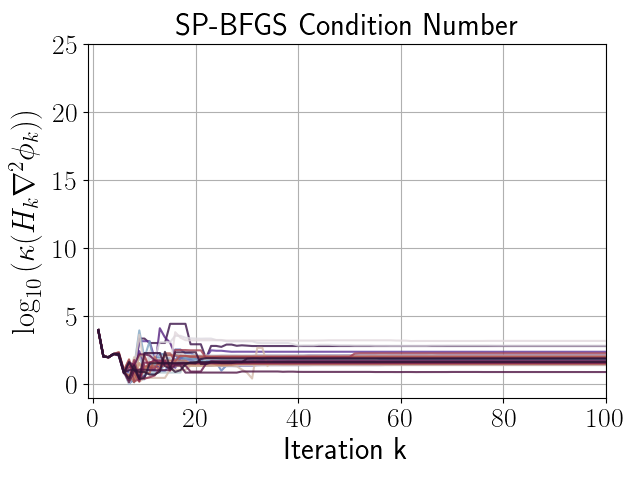} &
\includegraphics[width=5.7cm]{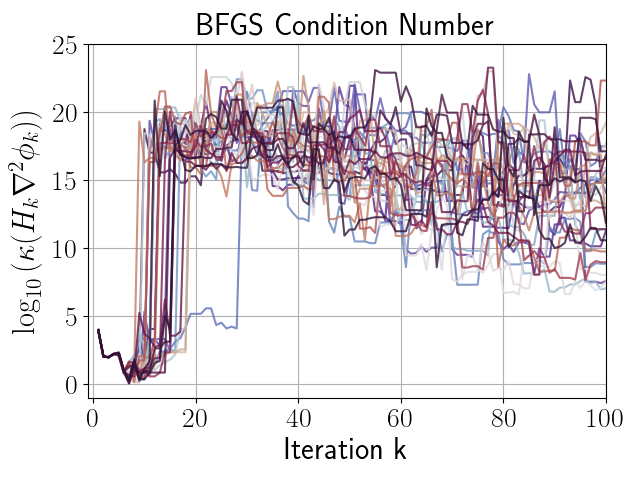} \\
\end{tabular}
\caption{Base 10 logarithm of the condition number of the true Hessian $\nabla^2 \phi_k$ scaled by the approximate inverse Hessian $H_k$ at each iteration $k$ for $30$ independent runs. As ideally one wants $H_k \nabla^2 \phi_k = I$, which has a condition number of $1$, the ideal value on these plots is $\log_{10}(1) = 0$. Observe how the BFGS approximation deteriorates massively inside $\mathcal{N}_1(1,1)$, and how SP-BFGS avoids this massive deterioration. From examining the BFGS $H_k$, the authors were able to determine that in the region of deterioration, the values of the entries of $H_k$ are often smaller than $10^{-5}$. }
\label{fig:condition-number}
\end{minipage}%
\end{figure}

\subsection{CUTEst Test Problems With Various Additive Noise Combinations}
\label{sec:CUTEst-tests}
The remaining $32$ test problems were selected from the CUTEst problem set, the successor of CUTEr \cite{gould-orban-toint-cuter}. At the time of writing, SIF files and descriptions of all $32$ test problems can be found at \url{https://www.cuter.rl.ac.uk/Problems/mastsif.shtml}. As a brief summary, some of the problems can be interpreted as least squares type problems (e.g. ARGTRGLS), some of the problems are ill conditioned or singular type problems (e.g. BOXPOWER), some of the problems are well known nonlinear optimization test problems (e.g. ROSENBR) or extensions of them (e.g. ROSENBRTU, SROSENBR), and some of the problems come from real applications (e.g. COATING, HEART6LS, VIBRBEAM). As shown in Tables~\ref{tab:SPBFGS-alg-comp-func-and-grad-noise} and~\ref{tab:BFGS-alg-comp-func-and-grad-noise}, the selected CUTEst test problems vary in size from $2$-dimensional to $1000$-dimensional.

Using these $32$ CUTEst test problems and a fixed budget of $2000$ objective function evaluations (not $2000$ iterations) per test, we tested the performance of SP-BFGS compared to BFGS with various combinations of function and gradient noise levels $\bar{\epsilon}_f$ and $\bar{\epsilon}_g$. For all the experiments in Tables~\ref{tab:rosenbrock-fixed-function-evals-noise-level-comp-results},~\ref{tab:SPBFGS-alg-comp-func-and-grad-noise}, and~\ref{tab:BFGS-alg-comp-func-and-grad-noise}, as well as the additional experiments in Appendix~\ref{app:extended-numerical-experiments}, both SP-BFGS and BFGS skipped updating if the curvature condition failed. In Tables~\ref{tab:rosenbrock-fixed-function-evals-noise-level-comp-results},~\ref{tab:SPBFGS-alg-comp-func-and-grad-noise}, and~\ref{tab:SPBFGS-alg-comp-grad-noise-only}, the SPBFGS penalty parameter was set as $\beta_k = \frac{10^8}{\bar{\epsilon}_g} \norm{s_k}_2 + 10^{-10}$, as the authors heuristically discovered setting $N_s = \frac{10^8}{\bar{\epsilon}_g}$ works well in practice for a variety of problems. With regards to the backtracking line search based on (\ref{eq:armijo-condition-relaxed-noise}), we set $\alpha^{0} = 1$, $\epsilon_{A} = \bar{\epsilon}_f$, $c_1 =10^{-4}$ , $\tau = 1/2$, and the maximum number of backtracks as $45$. We define $\Delta_{opt} \coloneqq \log_{10}(\phi_{best} - \phi^{\star})$ as a measure of the optimality gap, and use $\phi_{best}$ to denote the smallest value of the true function $\phi$ measured at any point during an algorithm run. The true minimum values $\phi^{\star}$ for each CUTEst problem were obtained from the SIF file for each CUTEst problem. The sample variance (i.e. the variance with Bessel's correction) is denoted by $s^2(\cdot)$. 

Table~\ref{tab:rosenbrock-fixed-function-evals-noise-level-comp-results} compares the performance of SP-BFGS vs. BFGS on the Rosenbrock function (i.e. ROSENBR) corrupted by different combinations of function and gradient noise of varying orders of magnitude. Observe that SP-BFGS outperforms BFGS with respect to the mean and median optimality gap for every noise combination in Table~\ref{tab:rosenbrock-fixed-function-evals-noise-level-comp-results}, sometimes by several orders of magnitude. Tables~\ref{tab:SPBFGS-alg-comp-func-and-grad-noise} and~\ref{tab:BFGS-alg-comp-func-and-grad-noise} compare the performance of SP-BFGS vs. BFGS on the $32$ CUTEst test problems with both function and gradient noise present. Gradient noise was generated using $\bar{\epsilon}_g = 10^{-4} \norm{\nabla \phi(x^0)}_2$, and function noise was generated using $\bar{\epsilon}_f = 10^{-4} \abs{\phi(x^0)}$, both to ensure that noise does not initially dominate function or gradient evaluations. Note that as the noise in these numerical experiments is additive, the signal to noise ratio of gradient measurements decreases as a stationary point is approached. Overall, SP-BFGS outperforms BFGS on approximately $70 \%$ of the CUTEst problems with both function and gradient noise present, and performs at least as good as BFGS on approximately $90 \%$ of these problems. Referring to Appendix~\ref{app:extended-numerical-experiments}, with only gradient noise present, these percentages become $80 \%$ and $95 \%$ respectively.

\newpage
\begin{table}[H]
\begin{center}
  {\def\arraystretch{1.5}\tabcolsep=3pt
  \begin{tabular}{ | c | c | c | c | c | c | c | c | }
	\hline
    $\bar{\epsilon}_f$ & $\bar{\epsilon}_g$ & \textit{Mean($\Delta_{opt}$)} & \textit{Median($\Delta_{opt}$)} & \textit{Min($\Delta_{opt}$)} & \textit{Max($\Delta_{opt}$)} & \textit{$s^2(\Delta_{opt})$} & \textit{Mean($I$)}\\ 
    \hline       
        \multicolumn{8}{ | c | }{ \textbf{SPBFGS With No Function Noise} } \\
    \hline
    $0$ & $10^{-4}$ & -1.4E+01 & -1.4E+01 & -1.8E+01 & -1.2E+01 & 1.4E+00 & 114 \\
    \hline
    $0$ & $10^{-2}$ & -1.3E+01 & -1.3E+01 & -1.5E+01 & -8.3E+00 & 2.9E+00 & 104 \\
    \hline
    $0$ & $10^{0}$ & -2.1E+00 & -1.8E+00 & -5.7E+00 & -9.2E-01 & 9.4E-01 & 153 \\
    \hline
    $0$ & $10^{2}$ & 3.5E-02 & 2.9E-01 & -1.9E+00 & 7.9E-01 & 3.9E-01 & 90 \\
    \hline             
        \multicolumn{8}{ | c | }{ \textbf{BFGS With No Function Noise} } \\
    \hline
    $0$ & $10^{-4}$ & -1.1E+01 & -1.0E+01 & -1.4E+01 & -8.8E+00 & 1.8E+00 & 263 \\
    \hline
    $0$ & $10^{-2}$ & -6.6E+00 & -6.6E+00 & -9.6E+00 & -4.3E+00 & 1.6E+00 & 281 \\
    \hline
    $0$ & $10^{0}$ & -1.5E+00 & -1.2E+00 & -3.3E+00 & -5.4E-01 & 6.3E-01 & 279 \\
    \hline
    $0$ & $10^{2}$ & 1.1E-01 &  4.3E-01 & -2.4E+00 & 6.5E-01 & 4.7E-01 & 373 \\
    \hline       
        \multicolumn{8}{ | c | }{ \textbf{SPBFGS With Low Function Noise Level} } \\
    \hline
    $10^{-4}$ & $10^{-4}$ & -1.4E+01 & -1.4E+01 & -1.5E+01 & -1.3E+01 & 1.9E-01 & 1980 \\
    \hline
    $10^{-4}$ & $10^{-2}$ & -1.0E+01 & -1.0E+01 & -1.2E+01 & -8.0E+00 & 1.3E+00 & 1964 \\
    \hline
    $10^{-4}$ & $10^{0}$ & -2.1E+00 & -2.0E+00 & -3.6E+00 & -1.6E+00 & 2.0E-01 & 1759 \\
    \hline
    $10^{-4}$ & $10^{2}$ & 8.7E-02 & 3.1E-01 & -2.2E+00 & 9.1E-01 & 4.5E-01 & 1720 \\
    \hline         
        \multicolumn{8}{ | c | }{ \textbf{BFGS With Low Function Noise Level} } \\
    \hline
    $10^{-4}$ & $10^{-4}$ & -1.1E+01 & -1.2E+01 & -1.5E+01 & -8.7E+00 & 1.7E+00 & 1980 \\
    \hline
    $10^{-4}$ & $10^{-2}$ & -6.6E+00 & -6.5E+00 & -8.8E+00 & -4.7E+00 & 1.2E+00 & 1975 \\
    \hline
    $10^{-4}$ & $10^{0}$ & -1.2E+00 & -1.1E+00 & -1.8E+00 & -8.6E-01 & 5.9E-02 & 1936 \\
    \hline
    $10^{-4}$ & $10^{2}$ & 9.5E-02 & 5.1E-01 & -3.1E+00 & 9.2E-01 & 8.5E-01 & 1934 \\
    \hline       
        \multicolumn{8}{ | c | }{ \textbf{SPBFGS With Medium Function Noise Level} } \\
    \hline  
    $10^{-2}$ & $10^{-4}$ & -1.4E+01 & -1.4E+01 & -1.5E+01 & -1.3E+01 & 3.4E-01 & 1981 \\
    \hline
    $10^{-2}$ & $10^{-2}$ & -1.0E+01 & -1.0E+01 & -1.3E+01 & -7.5E+00 & 1.5E+00 & 1977 \\
    \hline
    $10^{-2}$ & $10^{0}$ & -3.4E+00 & -3.0E+00 & -7.5E+00 & -2.0E+00 & 1.7E+00 & 1934 \\
    \hline
    $10^{-2}$ & $10^{2}$ & -1.8E-01 & 1.7E-01 & -3.7E+00 & 7.4E-01 & 1.0E+00 & 1890 \\
    \hline       
        \multicolumn{8}{ | c | }{ \textbf{BFGS With Medium Function Noise Level} } \\
    \hline
    $10^{-2}$ & $10^{-4}$ & -1.1E+01 & -1.1E+01 & -1.4E+01 & -8.5E+00 & 1.4E+00 & 1981 \\
    \hline
    $10^{-2}$ & $10^{-2}$ & -6.7E+00 & -6.7E+00 & -1.0E+01 & -4.9E+00 & 1.7E+00 & 1979 \\
    \hline
    $10^{-2}$ & $10^{0}$ & -1.8E+00 & -1.5E+00 & -3.8E+00 & -9.1E-01 & 6.3E-01 & 1961 \\
    \hline
    $10^{-2}$ & $10^{2}$ & 1.4E-01 & 3.9E-01 & -2.3E+00 & 8.5E-01 & 6.1E-01 & 1953 \\
    \hline       
        \multicolumn{8}{ | c | }{ \textbf{SPBFGS With High Function Noise Level} } \\
    \hline    
    $10^{0}$ & $10^{-4}$ & -1.4E+01 & -1.4E+01 & -1.5E+01 & -1.3E+01 & 2.2E-01 & 1980 \\
    \hline
    $10^{0}$ & $10^{-2}$ & -1.0E+01 & -1.0E+01 & -1.2E+01 & -7.3E+00 & 9.6E-01 & 1978 \\
    \hline
    $10^{0}$ & $10^{0}$ & -3.1E+00 & -2.8E+00 & -5.1E+00 & -1.7E+00 & 8.9E-01 & 1969 \\
    \hline
    $10^{0}$ & $10^{2}$ & -2.2E-01 & 1.1E-02 & -1.9E+00 & 8.4E-01 & 7.6E-01 & 1943 \\
    \hline     
        \multicolumn{8}{ | c | }{ \textbf{BFGS With High Function Noise Level} } \\
    \hline 
    $10^{0}$ & $10^{-4}$ & -1.1E+01 & -1.1E+01 & -1.3E+01 & -9.0E+00 & 1.4E+00 & 1980 \\
    \hline
    $10^{0}$ & $10^{-2}$ & -6.7E+00 & -6.4E+00 & -9.1E+00 & -5.0E+00 & 1.5E+00 & 1980 \\
    \hline
    $10^{0}$ & $10^{0}$ & -1.8E+00 & -1.4E+00 & -5.3E+00 & -8.2E-01 & 1.1E+00 & 1973 \\
    \hline
    $10^{0}$ & $10^{2}$ & -2.9E-02 & 3.7E-01 & -2.1E+00 & 8.9E-01 & 7.9E-01 & 1965 \\
    \hline    
  \end{tabular}
  }
\end{center}
\caption{Performance of SP-BFGS vs. BFGS on the Rosenbrock function (i.e. ROSENBR) corrupted by noise. $\Delta_{opt} \coloneqq \log_{10}(\phi_{best} - \phi^{\star})$ measures the optimality gap, where $\phi_{best}$ denotes the smallest value of the true function $\phi$ measured at any point during an algorithm run. The number of objective function evaluations is fixed at $2000$, but the number of iterations $I$ can vary. Statistics are calculated from a sample of $30$ runs per algorithm.  }
\label{tab:rosenbrock-fixed-function-evals-noise-level-comp-results}
\end{table}

\newpage
\begin{table}[H]
\begin{center}
  {\def\arraystretch{1.5}\tabcolsep=3pt
  \begin{tabular}{ | c | c | c | c | c | c | c | }
    \hline
        \multicolumn{7}{ | c | }{ \textbf{SP-BFGS With Function And Gradient Noise}} \\
    \hline
    \textit{Problem} & \textit{Dim.} & \textit{Mean($\Delta_{opt}$)} & \textit{Median($\Delta_{opt}$)} & \textit{Min($\Delta_{opt}$)} & \textit{Max($\Delta_{opt}$)} & \textit{$s^2(\Delta_{opt})$} \\ 
    \hline
    ARGTRGLS & 200 & 4.5E-02 & 4.8E-02 & 1.7E-02 & 8.0E-02 & 2.5E-04 \\
    \hline
    ARWHEAD & 500 & -2.5E+00 & -2.5E+00 & -2.6E+00 & -2.5E+00 & 2.6E-04 \\
    \hline
    BEALE & 2 & -1.1E+01 & -1.1E+01 & -1.4E+01 & -9.8E+00 & 8.0E-01 \\
    \hline
    BOX3 & 3 & -7.1E+00 & -6.8E+00 & -8.9E+00 & -6.5E+00 & 6.2E-01 \\
    \hline
    BOXPOWER & 100 & -3.8E+00 & -3.8E+00 & -4.2E+00 & -3.5E+00 & 5.0E-02 \\
    \hline
	BROWNBS & 2 & -1.2E+00 & -7.4E-01 & -5.2E+00 & 2.0E+00 & 3.5E+00 \\
	\hline
	BROYDNBDLS & 50 & -6.2E+00 & -6.2E+00 & -6.4E+00 & -6.0E+00 & 6.9E-03 \\
	\hline
	CHAINWOO & 100 & 1.7E+00 & 1.8E+00 & 7.7E-03 & 2.1E+00 & 1.6E-01 \\
	\hline
	CHNROSNB & 50 & -4.2E+00 & -4.0E+00 & -5.5E+00 & -3.6E+00 & 3.8E-01 \\
	\hline
	COATING & 134 & -2.7E-02 & -1.2E-02 & -1.3E-01 & 9.6E-02 & 3.5E-03 \\
	\hline
	COOLHANSLS & 9 & -1.2E+00 & -1.1E+00 & -1.6E+00 & -8.7E-01 & 1.7E-02 \\
	\hline
	CUBE & 2 & -5.2E+00 & -4.7E+00 & -8.9E+00 & -3.1E+00 & 2.2E+00 \\
	\hline
	CYCLOOCFLS & 20 & -8.4E+00 & -8.5E+00 & -9.1E+00 & -6.9E+00 & 3.0E-01 \\
	\hline
	EXTROSNB & 10 & -5.2E+00 & -5.2E+00 & -5.2E+00 & -5.1E+00 & 1.3E-03 \\
	\hline
	FMINSRF2 & 64 & -8.7E+00 & -8.7E+00 & -8.7E+00 & -8.6E+00 & 2.6E-04 \\
	\hline
	GENHUMPS & 5 & 4.1E-02 & 2.4E-01 & -2.9E+00 & 7.8E-01 & 4.5E-01 \\
	\hline
	GENROSE & 5 & -9.4E+00 & -9.3E+00 & -9.9E+00 & -9.1E+00 & 5.6E-02 \\
	\hline
	HEART6LS & 6 & -3.5E-01 & 2.7E-01 & -2.0E+00 & 1.2E+00 & 1.5E+00 \\
	\hline
	HELIX & 3 & -6.1E+00 & -6.0E+00 & -7.4E+00 & -4.5E+00 & 5.0E-01 \\
	\hline
	MANCINO & 30 & -2.1E+00 & -2.1E+00 & -2.5E+00 & -1.9E+00 & 1.2E-02 \\
	\hline
	METHANB8LS & 31 & -3.8E+00 & -3.9E+00 & -4.2E+00 & -3.4E+00 & 3.6E-02 \\
	\hline
	MODBEALE & 200 & 1.1E+00 & 1.0E+00 & 4.7E-01 & 1.8E+00 & 1.8E-01 \\
	\hline
	NONDIA & 10 & -4.2E-03 & -4.3E-03 & -4.4E-03 & -3.2E-03 & 9.1E-08 \\
	\hline
	POWELLSG & 4 & -6.1E+00 & -6.0E+00 & -7.9E+00 & -4.6E+00 & 9.1E-01 \\
	\hline
	POWER & 10 & -3.9E+00 & -3.8E+00 & -4.9E+00 & -3.3E+00 & 1.9E-01 \\
	\hline
	ROSENBR & 2 & -8.6E+00 & -8.5E+00 & -1.1E+01 & -6.3E+00 & 1.8E+00 \\
	\hline
	ROSENBRTU & 2 & -1.8E+01 & -1.8E+01 & -2.0E+01 & -1.7E+01 & 4.0E-01 \\
	\hline
	SBRYBND & 500 & 3.9E+00 & 3.9E+00 & 3.9E+00 & 3.9E+00 & 9.2E-06 \\
	\hline
	SINEVAL & 2 & -1.4E+01 & -1.4E+01 & -1.5E+01 & -1.3E+01 & 3.7E-01 \\
	\hline
	SNAIL & 2 & -1.2E+01 & -1.2E+01 & -1.4E+01 & -1.1E+01 & 2.9E-01 \\
	\hline
	SROSENBR & 1000 & 5.0E-01 & 5.0E-01 & 2.9E-01 & 6.8E-01 & 8.6E-03 \\
	\hline
	VIBRBEAM & 8 & 1.5E+00 & 1.5E+00 & 1.2E+00 & 2.1E+00 & 2.6E-02 \\
	\hline
	\end{tabular}
  }
\end{center}
\caption{Performance of SP-BFGS on $32$ selected CUTEst test problems with noise added to both function and gradient evaluations. The number of objective function evaluations is fixed at $2000$. $\Delta_{opt} \coloneqq \log_{10}(\phi_{best} - \phi^{\star})$ measures the optimality gap, where $\phi_{best}$ denotes the smallest value of the true function $\phi$ measured at any point during an algorithm run. Statistics are calculated from a sample of $30$ runs per algorithm, and the \textit{Dim.} column gives the problem dimension. The SPBFGS penalty parameter was set as $\beta_k = \frac{10^8}{\bar{\epsilon}_g} \norm{s_k}_2 + 10^{-10}$. For each problem, function noise was generated using $\bar{\epsilon}_f = 10^{-4} \abs{\phi(x^0)}$, and gradient noise was generated using $\bar{\epsilon}_g = 10^{-4} \norm{\nabla \phi(x^0)}_2$, where the starting point $x^0$ varies by CUTEst problem. }
\label{tab:SPBFGS-alg-comp-func-and-grad-noise}
\end{table}

\newpage
\begin{table}[H]
\begin{center}
  {\def\arraystretch{1.5}\tabcolsep=3pt
  \begin{tabular}{ | c | c | c | c | c | c | c | }
    \hline
        \multicolumn{7}{ | c | }{ \textbf{BFGS With Function And Gradient Noise}} \\
    \hline
    \textit{Problem} & \textit{Dim.} & \textit{Mean($\Delta_{opt}$)} & \textit{Median($\Delta_{opt}$)} & \textit{Min($\Delta_{opt}$)} & \textit{Max($\Delta_{opt}$)} & \textit{$s^2(\Delta_{opt})$} \\ 
    \hline
    ARGTRGLS & 200 & 5.6E-02 & 5.5E-02 & 2.4E-02 & 8.4E-02 & 2.7E-04 \\
    \hline
    ARWHEAD & 500 & -2.5E+00 & -2.5E+00 & -2.6E+00 & -2.5E+00 & 4.1E-04 \\
    \hline
    BEALE & 2 & -7.7E+00 & -7.8E+00 & -9.7E+00 & -6.1E+00 & 7.1E-01 \\
    \hline
    BOX3 & 3 & -6.5E+00 & -6.5E+00 & -6.7E+00 & -6.4E+00 & 4.7E-03 \\
    \hline
    BOXPOWER & 100 & -3.7E+00 & -3.7E+00 & -4.2E+00 & -3.4E+00 & 3.4E-02 \\
    \hline
	BROWNBS & 2 & 6.8E-01 & 1.3E+00 & -3.2E+00 & 3.1E+00 & 2.9E+00 \\
	\hline
	BROYDNBDLS & 50 & -6.0E+00 & -6.0E+00 & -6.3E+00 & -5.7E+00 & 2.6E-02 \\
	\hline
	CHAINWOO & 100 & 1.7E+00 & 1.7E+00 & 1.2E+00 & 2.1E+00 & 5.9E-02 \\
	\hline
	CHNROSNB & 50 & -4.2E+00 & -4.1E+00 & -5.7E+00 & -3.4E+00 & 4.4E-01 \\
	\hline
	COATING & 134 & -3.7E-02 & -5.7E-02 & -1.6E-01 & 8.0E-02 & 4.1E-03 \\
	\hline
	COOLHANSLS & 9 & -1.0E+00 & -1.0E+00 & -2.0E+00 & -4.5E-01 & 7.2E-02 \\
	\hline
	CUBE & 2 & -1.6E+00 & -1.4E+00 & -3.6E+00 & -9.7E-01 & 4.1E-01 \\
	\hline
	CYCLOOCFLS & 20 & -7.2E+00 & -7.2E+00 & -9.1E+00 & -5.8E+00 & 8.7E-01 \\
	\hline
	EXTROSNB & 10 & -5.2E+00 & -5.2E+00 & -5.2E+00 & -5.1E+00 & 1.8E-03 \\
	\hline
	FMINSRF2 & 64 & -8.6E+00 & -8.7E+00 & -8.8E+00 & -8.2E+00 & 2.8E-02 \\
	\hline
	GENHUMPS & 5 & 1.2E-01 & 1.2E-01 & -1.2E+00 & 8.1E-01 & 2.3E-01 \\
	\hline
	GENROSE & 5 & -7.5E+00 & -7.6E+00 & -9.1E+00 & -6.2E+00 & 7.3E-01 \\
	\hline
	HEART6LS & 6 & 3.1E-01 & 6.1E-01 & -1.9E+00 & 1.2E+00 & 1.4E+00 \\
	\hline
	HELIX & 3 & -4.5E+00 & -4.7E+00 & -7.0E+00 & -2.7E+00 & 1.1E+00 \\
	\hline
	MANCINO & 30 & -1.6E+00 & -1.6E+00 & -1.8E+00 & -1.3E+00 & 1.3E-02 \\
	\hline
	METHANB8LS & 31 & -3.9E+00 & -3.8E+00 & -4.4E+00 & -3.6E+00 & 5.5E-02 \\
	\hline
	MODBEALE & 200 & 1.1E+00 & 1.1E+00 & 2.9E-01 & 1.8E+00 & 1.6E-01 \\
	\hline
	NONDIA & 10 & -3.7E-03 & -3.8E-03 & -4.4E-03 & -2.6E-03 & 3.1E-07 \\
	\hline
	POWELLSG & 4 & -5.2E+00 & -5.2E+00 & -7.6E+00 & -4.2E+00 & 7.1E-01 \\
	\hline
	POWER & 10 & -3.5E+00 & -3.5E+00 & -4.1E+00 & -2.9E+00 & 1.0E-01 \\
	\hline
	ROSENBR & 2 & -5.9E+00 & -5.5E+00 & -9.2E+00 & -4.5E+00 & 1.4E+00 \\
	\hline
	ROSENBRTU & 2 & -1.6E+01 & -1.6E+01 & -1.8E+01 & -1.4E+01 & 1.5E+00 \\
	\hline
	SBRYBND & 500 & 3.9E+00 & 3.9E+00 & 3.9E+00 & 3.9E+00 & 2.7E-05 \\
	\hline
	SINEVAL & 2 & -1.1E+01 & -1.1E+01 & -1.3E+01 & -8.9E+00 & 1.3E+00 \\
	\hline
	SNAIL & 2 & -9.4E+00 & -9.2E+00 & -1.2E+01 & -8.0E+00 & 7.2E-01 \\
	\hline
	SROSENBR & 1000 & 5.4E-01 & 5.4E-01 & 3.6E-01 & 7.8E-01 & 6.9E-03 \\
	\hline
	VIBRBEAM & 8 & 1.7E+00 & 1.7E+00 & 1.2E+00 & 2.0E+00 & 2.9E-02 \\
	\hline
	\end{tabular}
  }
\end{center}
\caption{Performance of BFGS on $32$ selected CUTEst test problems with noise added to both function and gradient evaluations. The number of objective function evaluations is fixed at $2000$. $\Delta_{opt} \coloneqq \log_{10}(\phi_{best} - \phi^{\star})$ measures the optimality gap, where $\phi_{best}$ denotes the smallest value of the true function $\phi$ measured at any point during an algorithm run. Statistics are calculated from a sample of $30$ runs per algorithm, and the \textit{Dim.} column gives the problem dimension. For each problem, function noise was generated using $\bar{\epsilon}_f = 10^{-4} \abs{\phi(x^0)}$, and gradient noise was generated using $\bar{\epsilon}_g = 10^{-4} \norm{\nabla \phi(x^0)}_2$, where the starting point $x^0$ varies by CUTEst problem. }
\label{tab:BFGS-alg-comp-func-and-grad-noise}
\end{table}

%\newpage
%==========================
\section{Final Remarks}
\label{sec:final-remarks}
%==========================
In this paper, we introduced SP-BFGS, a new variant of the BFGS method designed to resist the corrupting effects of noise. Motivated by regularized least squares estimation, we derived the SP-BFGS update by applying a penalty method to the secant condition. We argued that with an appropriate choice of penalty parameter, SP-BFGS updating is robust to the corrupting effects of noise that can destroy the performance of BFGS. We empirically validated this claim by performing numerical experiments on a diverse set of over $30$ test problems with both function and gradient noise of varying orders of magnitude. The results of these numerical experiments showed that SP-BFGS can outperform BFGS approximately $70 \%$ or more of the time, and performs at least as good as BFGS approximately $90 \%$ or more of the time. Furthermore, a theoretical analysis confirmed that with appropriate choices of penalty parameter, it is possible to guarantee that SP-BFGS is not corrupted arbitrarily badly by noise, unlike standard BFGS. In the future, we believe it is worth investigating the performance of SP-BFGS in the presence of other types of noise, including multiplicative stochastic noise and deterministic noise, and also believe it is worthwhile to study the use of noise estimation techniques in conjunction with SP-BFGS updating. The authors are also working to publish a limited memory version of SP-BFGS for high dimensional noisy problems.

\begin{acknowledgements}
EH and BI's work is supported by the Natural Sciences and Engineering Research Council of Canada (NSERC) and the University of British Columbia (UBC).
\end{acknowledgements}

% Authors must disclose all relationships or interests that 
% could have direct or potential influence or impart bias on 
% the work: 
%
\section*{Conflict of interest}

The authors declare that they have no conflict of interest.

% BibTeX users please use one of
%\bibliographystyle{spbasic}      % basic style, author-year citations
\bibliographystyle{spmpsci}      % mathematics and physical sciences
\bibliography{references}   % name your BibTeX data base

% Non-BibTeX users please use
%\begin{thebibliography}{}
%
% and use \bibitem to create references. Consult the Instructions
% for authors for reference list style.
%
%\bibitem{RefJ}
% Format for Journal Reference
%Author, Article title, Journal, Volume, page numbers (year)
% Format for books
%\bibitem{RefB}
%Author, Book title, page numbers. Publisher, place (year)
% etc
%\end{thebibliography}

\newpage

% Appendices
\appendix 

\section{Proof of Theorem~\ref{thm:sp-bfgs-update}}
\label{app:SPBFGS-update-proof}
\normalsize
To produce the SP-BFGS update, we first rearrange (\ref{eq:SP-BFGS-lagrange-FOC}), revealing that
\begin{equation} \label{eq:FOC_1_rearranged}
(H - H_k) = - W^{-1} ( u y_k^T + \Gamma^T - \Gamma ) W^{-1}
\end{equation}
and so the symmetry requirement that $H = H^T$ means transposing (\ref{eq:FOC_1_rearranged}) gives
\begin{equation}
u y_k^T + \Gamma^T - \Gamma  =  ( u y_k^T + \Gamma^T - \Gamma )^T = y_k u^T + \Gamma - \Gamma^T
\end{equation}
which rearranges to
\begin{equation}
\Gamma^T - \Gamma = \frac{1}{2} ( y_k u^T - u y_k^T ) 
\end{equation}
and so
\begin{equation} \label{eq:eliminate-big-gamma}
(H - H_k) = - \frac{1}{2} W^{-1} (y_k u^T + u y_k^T) W^{-1}.
\end{equation}
Next, we right multiply (\ref{eq:eliminate-big-gamma}) by $y_k$ to get
\begin{equation}
(H - H_k) y_k = - \frac{1}{2} W^{-1} \bigg ( y_k u^T W^{-1} y_k + u (y_k^T W^{-1} y_k) \bigg )
\end{equation}
and use (\ref{eq:SP-BFGS-sec-cond}) to get that
\begin{equation}
s_k + \frac{W^{-1} u}{\beta_k} - H_k y_k = - \frac{1}{2} W^{-1} \bigg ( y_k u^T W^{-1} y_k + u (y_k^T W^{-1} y_k) \bigg ) .
\end{equation}
We now left multiply both sides by $-2 W$ and rearrange, giving
\begin{equation}
-2 W (s_k - H_k y_k) = y_k u^T W^{-1} y_k + u \bigg ( y_k^T W^{-1} y_k + \frac{2}{\beta_k} \bigg ).  
\end{equation}
This can be rearranged so that $u$ is isolated, giving
\small
\begin{equation} \label{eq:lambda-partial-isolation}
u = \frac{-2 W (s_k - H_k y_k) - y_k u^T W^{-1} y_k}{y_k^T W^{-1} y_k + \frac{2}{\beta_k}} = - \frac{2 W (s_k - H_k y_k) + y_k u^T W^{-1} y_k}{y_k^T W^{-1} y_k + \frac{2}{\beta_k}}.
\end{equation}
\normalsize
To get rid of the $u^T$ on the right hand side, we first left multiply both sides by $y_k^T W^{-1}$, and then transpose to get 
\begin{equation}
u^T W^{-1} y_k = - \frac{2 (s_k - H_k y_k)^T y_k + (y_k^T W^{-1} y_k) (u^T W^{-1} y_k)}{y_k^T W^{-1} y_k + \frac{2}{\beta_k}}
\end{equation}
where we have taken advantage of the fact that the transpose of a scalar returns the same scalar. This now allows us to solve for $u^T W^{-1} y_k$ using some basic algebra, and resulting in
\begin{equation} \label{eq:int-lambda-transpose-result}
u^T W^{-1} y_k = - \frac{(s_k - H_k y_k)^T y_k}{y_k^T W^{-1} y_k + \frac{1}{\beta_k}} .
\end{equation}
Substituting (\ref{eq:int-lambda-transpose-result}) into (\ref{eq:lambda-partial-isolation}) gives
\begin{equation} \label{eq:lambda-isolated}
u = \frac{y_k y_k^T (s_k - H_k y_k)}{(y_k^T W^{-1} y_k + \frac{2}{\beta_k})(y_k^T W^{-1} y_k + \frac{1}{\beta_k})} - \frac{2 W (s_k - H_k y_k)}{y_k^T W^{-1} y_k + \frac{2}{\beta_k}}.
\end{equation}
Now, if we substitute the expression for $u$ in (\ref{eq:lambda-isolated}) into (\ref{eq:eliminate-big-gamma}), after some simplification we get 
\small
\begin{multline}
(H - H_k) = \frac{1}{(y_k^T W^{-1} y_k + \frac{2}{\beta_k})} \bigg [ (s_k - H_k y_k) y_k^T W^{-1} + W^{-1} y_k (s_k - H_k y_k)^T \\ - \frac{y_k^T(s_k - H_k y_k)}{(y_k^T W^{-1} y_k + \frac{1}{\beta_k})} W^{-1} y_k y_k^T W^{-1} \bigg ].
\end{multline}
\normalsize
Now, we further simplify by applying that $W s_k = y_k$, and thus $W^{-1} y_k = s_k$, revealing
\small
\begin{equation}
H = H_k + \frac{(s_k - H_k y_k) s_k^T + s_k (s_k - H_k y_k)^T}{(y_k^T s_k + \frac{2}{\beta_k})} - \frac{y_k^T(s_k - H_k y_k)}{(y_k^T s_k + \frac{2}{\beta_k})(y_k^T s_k + \frac{1}{\beta_k})} s_k s_k^T 
\end{equation}
\normalsize
which, after a bit of algebra, reveals that the update formula solving the system defined by (\ref{eq:SP-BFGS-lagrange-FOC}),~(\ref{eq:SP-BFGS-sec-cond}), and~(\ref{eq:SP-BFGS-sym-cond}) can be expressed as
\begin{equation} \label{eq:sp-bfgs-expanded}
H^{*} = H_k - \frac{H_k y_k s_k^T + s_k y_k^T H_k^T}{(y_k^T s_k + \frac{2}{\beta_k})} + \bigg [ \frac{y_k^T s_k + \frac{2}{\beta_k} + y_k^T H_k y_k}{(y_k^T s_k + \frac{2}{\beta_k})(y_k^T s_k + \frac{1}{\beta_k})} \bigg ] s_k s_k^T .
\end{equation}
We can make (\ref{eq:sp-bfgs-expanded}) look similar to the common form of the BFGS update given in (\ref{eq:BFGS-Direct-Update-Rho}) by defining the two quantities $\gamma_k$ and $\omega_k$ as in (\ref{eq:gamma-omega-definitions}) and observing that completing the square gives
\begin{multline}
H^{*} = \bigg ( I - \frac{s_k y_k^T}{(y_k^T s_k + \frac{2}{\beta_k})} \bigg ) H_k \bigg ( I - \frac{y_k s_k^T}{(y_k^T s_k + \frac{2}{\beta_k})} \bigg ) \\ + \bigg [ \frac{y_k^T s_k + \frac{2}{\beta_k} + y_k^T H_k y_k}{(y_k^T s_k + \frac{2}{\beta_k})(y_k^T s_k + \frac{1}{\beta_k})} - \frac{y_k^T H_k y_k}{(y_k^T s_k + \frac{2}{\beta_k})^2} \bigg ] s_k s_k^T 
\end{multline}
which is equivalent to
\begin{equation} \label{eq:SP-BFGS-Direct-Update-factored-o}
H^{*} = \bigg ( I - \omega_k s_k y_k^T \bigg ) H_k \bigg ( I - \omega_k y_k s_k^T \bigg ) + \omega_k \bigg [ \frac{\gamma_k}{\omega_k} + (\gamma_k - \omega_k) y_k^T H_k y_k \bigg ] s_k s_k^T 
\end{equation}
concluding the proof.

\section{Proof of Lemma~\ref{thm:sp-bfgs-curv-cond}}
\label{app:sp-bfgs-curv-cond-proof}
The $H_{k+1}$ given by (\ref{eq:SP-BFGS-Direct-Update-factored}) has the general form 
\begin{equation} \label{eq:SP-BFGS-symmetric-decomposition}
H_{k+1} = G^T H_k G + d s_k s_k^T
\end{equation}
with the specific choices
\begin{equation} \label{eq:SP-BFGS-symmetric-decomposition-specific-choices}
G = I - \omega_k y_k s_k^T, \quad d = \omega_k \bigg [ \frac{\gamma_k}{\omega_k} + (\gamma_k - \omega_k) y_k^T H_k y_k \bigg ] .
\end{equation}
By definition, $H_{k+1}$ is positive definite if 
\begin{equation} \label{eq:positive-definite-definition}
v^T H_{k+1} v > 0, \quad \forall v \in \mathbb{R}^{n} \setminus 0  \text{~~}.
\end{equation}
We first show that (\ref{eq:sp-bfgs-curv-cond}) is a sufficient condition for $H_{k+1}$ to be positive definite, given that $H_k$ is positive definite. By applying (\ref{eq:SP-BFGS-symmetric-decomposition}) to (\ref{eq:positive-definite-definition}), we see that
\begin{equation} \label{eq:positive-definite-definition-decomp}
v^T \bigg ( G^T H_k G + d s_k s_k^T \bigg ) v > 0, \quad \forall v \in \mathbb{R}^{n} \setminus 0  
\end{equation}
must be true for the choices of $G$ and $d$ in (\ref{eq:SP-BFGS-symmetric-decomposition-specific-choices}) if $H_{k+1}$ is positive definite. Substituting (\ref{eq:SP-BFGS-symmetric-decomposition-specific-choices}) into (\ref{eq:positive-definite-definition-decomp}) reveals that 
\small
\begin{equation} \label{eq:positive-definite-definition-decomp-expanded}
\bigg ( v - \omega_k (s_k^T v) y_k \bigg )^T H_k \bigg ( v - \omega_k (s_k^T v) y_k \bigg ) + \omega_k \bigg [ \frac{\gamma_k}{\omega_k} + (\gamma_k - \omega_k) y_k^T H_k y_k \bigg ] (s_k^T v)^2 > 0 
\end{equation}
\normalsize
must be true for all $v \in \mathbb{R}^{n} \setminus 0$ if $H_{k+1}$ is positive definite. Both $(s_k^T v)^2$ and $v^T G^T H_k G v$ are always nonnegative. To see that $v^T G^T H_k G v \geq 0$, note that because $H_k$ is positive definite, it has a principal square root $H_k^{1/2}$, and so 
\begin{equation}
v^T G^T H_k G v = v^T G^T H_k^{1/2} H_k^{1/2} G v = \norm{H_k^{1/2} G v}_2^2 \geq 0  \text{~} .  
\end{equation}
We now observe that if $d > 0$, the right term $d (s_k^T v)^2$ in (\ref{eq:positive-definite-definition-decomp-expanded}) is zero if and only if $(s_k^T v) = 0$. However, if $(s_k^T v) = 0$, then the left term $v^T G^T H_k G v$ in (\ref{eq:positive-definite-definition-decomp-expanded}) is zero only when $v = 0$. Hence, the condition $d > 0$ guarantees that (\ref{eq:positive-definite-definition-decomp-expanded}) is true for all $v$ excluding the zero vector, and thus that $H_{k+1}$ is positive definite. The condition $d > 0$ expands to
\begin{equation} \label{eq:d-positive-definite}
\gamma_k + \omega_k (\gamma_k - \omega_k) y_k^T H_k y_k > 0  \text{~} . 
\end{equation}
Using the definitions of $\gamma_k$ and $\omega_k$ in (\ref{eq:gamma-omega-definitions}), it is clear that $(\gamma_k - \omega_k) \geq 0$, as $\beta_k$ can only take nonnegative values. Furthermore, as $H_k$ is positive definite, $y_k^T H_k y_k \geq 0$ for all $y_k$. As it is possible for $(\gamma_k - \omega_k) y_k^T H_k y_k$ to be zero, we requre $\gamma_k > 0$. The condition $\gamma_k > 0$ immediately gives (\ref{eq:sp-bfgs-curv-cond}), as $\gamma_k$ can only be positive if the denominator in its definition is positive. Finally, as $\beta_k$ can only take nonnegative values, (\ref{eq:sp-bfgs-curv-cond}) also ensures that $\omega_k$ is nonnegative, and so when (\ref{eq:sp-bfgs-curv-cond}) is true, $\omega_k (\gamma_k - \omega_k) y_k^T H_k y_k \geq 0$. In summary, we have shown that the condition (\ref{eq:sp-bfgs-curv-cond}) ensures that the left term in (\ref{eq:d-positive-definite}) is positive, and the right term nonnegative, so $d > 0$, and thus $H_{k+1}$ is positive definite.

We now show that (\ref{eq:sp-bfgs-curv-cond}) is a necessary condition for $H_{k+1}$ to be positive definite, given that $H_k$ is positive definite. If $H_{k+1}$ is positive definite, then 
\begin{equation} \label{eq:H-pd}
y_k^T H_{k+1} y_k > 0 
\end{equation}
assuming $y_k \neq 0$. Substituting (\ref{eq:SP-BFGS-sec-cond}) into (\ref{eq:H-pd}) gives
\begin{equation} \label{eq:sp-bfgs-curv-cond-unsimplified}
y_k^T \bigg [ s_{k} + \frac{W^{-1} u}{\beta_k} \bigg ] > 0 
\end{equation}
and using (\ref{eq:int-lambda-transpose-result}) shows that~(\ref{eq:sp-bfgs-curv-cond-unsimplified}) is equivalent to
\begin{equation}
y_k^T \bigg [ s_{k} + \frac{\gamma_k (H_k y_k - s_k)}{\beta_k} \bigg ] > 0 .
\end{equation}
Now, some algebra shows that
\small
\begin{equation} \label{eq:convex-combination}
\begin{split}
y_k^T \bigg [ s_{k} + \frac{\gamma_k (H_k y_k - s_k)}{\beta_k} \bigg ] & = y_k^T s_{k} + \frac{1}{1 + \beta_k y_k^T s_{k}} \bigg [ y_k^T H_k y_k - y_k^T s_{k} \bigg ] \\
 & = \bigg ( 1 - \frac{1}{1 + \beta_k y_k^T s_{k}} \bigg ) y_k^T s_{k} + \bigg ( \frac{1}{1 + \beta_k y_k^T s_{k}} \bigg ) y_k^T H_k y_k \\
 & = \bigg ( \frac{\beta_k y_k^T s_{k}}{1 + \beta_k y_k^T s_{k}} \bigg ) y_k^T s_{k} + \bigg ( \frac{1}{1 + \beta_k y_k^T s_{k}} \bigg ) y_k^T H_k y_k \\
 & = \frac{\beta_k (y_k^T s_{k})^2 + y_k^T H_k y_k}{1 + \beta_k y_k^T s_{k}} 
\end{split}
\end{equation}
\normalsize
and we also know that because $H_k$ is positive definite, $y_k^T H_k y_k > 0$ for all $y_k \neq 0$, by definition $\beta_k \geq 0$, and by the definition of the square of a real number, $(y_k^T s_{k})^2 \geq 0$. As a result, 
\begin{equation}
y_k^T \bigg [ s_{k} + \frac{W^{-1} u}{\beta_k} \bigg ] = \frac{\beta_k (y_k^T s_{k})^2 + y_k^T H_k y_k}{1 + \beta_k y_k^T s_{k}} > 0 
\end{equation}
is guaranteed only if the denominator $1 + \beta_k y_k^T s_{k}$ is positive, which occurs when
\begin{equation}
s_k^T y_k > - \frac{1}{\beta_k} .
\end{equation}
This establishes that (\ref{eq:sp-bfgs-curv-cond}) is a necessary condition for $H_{k+1}$ to be positive definite, given that $H_k$ is positive definite, and concludes the proof.

\section{Proof of Theorem~\ref{thm:sp-bfgs-inverse-update}}
\label{app:sp-bfgs-inverse-update-proof}
The Sherman-Morrison-Woodbury formula says
\begin{equation} \label{eq:Sherman-Morrison-Woodbury-formula}
(A + UCV)^{-1} = A^{-1} - A^{-1} U (C^{-1} + V A^{-1} U)^{-1} V A^{-1} .
\end{equation}
Now, observe that the SP-BFGS update (\ref{eq:SP-BFGS-Direct-Update-factored}) can be written in the factored form
\begin{equation} \label{eq:SP-BFGS-Direct-Update-block-factored}
H_{k+1} = H_k + \omega_k \big [ s_k \quad H_k y_k \big ] \left[ \begin{array}{cc}
\gamma_k \big ( \frac{1}{\omega_k} + y_k^T H_k y_k \big ) & -1 \\
-1 & 0 \end{array} 
\right] \left[ \begin{array}{c}
s_k^T \\
y_k^T H_k \end{array} 
\right] .
\end{equation}
Applying the Sherman-Morrison-Woodbury formula (\ref{eq:Sherman-Morrison-Woodbury-formula}) to the factored SP-BFGS update (\ref{eq:SP-BFGS-Direct-Update-block-factored}) with
\begin{equation*}
\resizebox{.95\hsize}{!}{$A = H_k, \quad U = \omega_k \big [ s_k \quad H_k y_k \big ], \quad C = \left[ \begin{array}{cc}
\gamma_k \big ( \frac{1}{\omega_k} + y_k^T H_k y_k \big ) & -1 \\
-1 & 0 \end{array} 
\right], \quad V = \left[ \begin{array}{c}
s_k^T \\
y_k^T H_k \end{array} 
\right]$} 
\end{equation*}
yields 
\begin{equation*}
\resizebox{.99\hsize}{!}{$H_{k+1}^{-1} = H_k^{-1} - H_k^{-1} \omega_k \big [ s_k \quad H_k y_k \big ] \bigg ( \left[ \begin{array}{cc}
\gamma_k \big ( \frac{1}{\omega_k} + y_k^T H_k y_k \big ) & -1 \\
-1 & 0 \end{array} 
\right]^{-1} + \left[ \begin{array}{c}
s_k^T \\
y_k^T H_k \end{array} 
\right] H_k^{-1} \omega_k \big [ s_k \quad H_k y_k \big ] \bigg )^{-1} \left[ \begin{array}{c}
s_k^T \\
y_k^T H_k \end{array} 
\right] H_k^{-1} . $ }
\end{equation*}
Inverting $C$ here gives
\begin{equation*}
C^{-1} = \left[ \begin{array}{cc}
\gamma_k \big ( \frac{1}{\omega_k} + y_k^T H_k y_k \big ) & -1 \\
-1 & 0 \end{array} 
\right]^{-1} = \left[ \begin{array}{cc}
0 & -1 \\
-1 & -\gamma_k \big ( \frac{1}{\omega_k} + y_k^T H_k y_k \big ) \end{array} 
\right]
\end{equation*}
and we also have
\begin{equation*}
\begin{split}
V A^{-1} U & = \left[ \begin{array}{c}
s_k^T \\
y_k^T H_k \end{array} 
\right] H_k^{-1} \omega_k \big [ s_k \quad H_k y_k \big ] \\ & = 
\omega_k \left[ \begin{array}{c}
s_k^T \\
y_k^T H_k \end{array} 
\right] \big [ H_k^{-1} s_k \quad y_k \big ] 
 \\ & = \left[ \begin{array}{cc}
\omega_k s_k^T H_k^{-1} s_k & \omega_k s_k^T y_k \\
\omega_k y_k^T s_k & \omega_k y_k^T H_k y_k \end{array} 
\right]
\end{split}
\end{equation*}
which is just a $2 \times 2$ matrix with real entries. Now, it becomes clear that
\small
\begin{equation*}
\begin{split}
(C^{-1} + V A^{-1} U) & = \bigg ( \left[ \begin{array}{cc}
\gamma_k \big ( \frac{1}{\omega_k} + y_k^T H_k y_k \big ) & -1 \\
-1 & 0 \end{array} 
\right]^{-1} + \left[ \begin{array}{c}
s_k^T \\
y_k^T H_k \end{array} 
\right] H_k^{-1} \omega_k \big [ s_k \quad H_k y_k \big ] \bigg ) \\
 & = \left[ \begin{array}{cc}
\omega_k s_k^T H_k^{-1} s_k & -1 + \omega_k s_k^T y_k \\
-1 + \omega_k y_k^T s_k & \omega_k y_k^T H_k y_k - \gamma_k \big ( \frac{1}{\omega_k} + y_k^T H_k y_k \big ) \end{array} 
\right] .
\end{split}
\end{equation*}
\normalsize
For notational compactness, let
\begin{equation*}
D = (C^{-1} + V A^{-1} U) = \left[ \begin{array}{cc}
\omega_k s_k^T H_k^{-1} s_k & -1 + \omega_k s_k^T y_k \\
-1 + \omega_k y_k^T s_k & \omega_k y_k^T H_k y_k - \gamma_k \big ( \frac{1}{\omega_k} + y_k^T H_k y_k \big ) \end{array} 
\right]
\end{equation*} 
so
\begin{equation*}
D^{-1} = \frac{1}{\det(D)} \left[ \begin{array}{cc}
\omega_k y_k^T H_k y_k - \gamma_k \big ( \frac{1}{\omega_k} + y_k^T H_k y_k \big ) & 1 - \omega_k s_k^T y_k \\
1 - \omega_k y_k^T s_k & \omega_k s_k^T H_k^{-1} s_k \end{array} 
\right]
\end{equation*}
where the determinant of $D$ is
\small
\begin{equation*}
\begin{split}
\det(D) & = \bigg ( \omega_k y_k^T H_k y_k - \gamma_k \bigg ( \frac{1}{\omega_k} + y_k^T H_k y_k \bigg ) \bigg ) \bigg ( \omega_k s_k^T H_k^{-1} s_k \bigg ) - (1 - \omega_k y_k^T s_k)^2 \\
& = \bigg ( (\omega_k - \gamma_k ) y_k^T H_k y_k - \frac{\gamma_k}{\omega_k} \bigg ) \bigg ( \omega_k s_k^T H_k^{-1} s_k \bigg ) - (1 - \omega_k y_k^T s_k)^2
\end{split}
\end{equation*}
\normalsize
and we have used the fact that $y_k^T s_k = s_k^T y_k$, as this is a scalar quantity. Next,
\scriptsize
\begin{equation*}
\begin{split}
U \det(D) D^{-1} V & = \omega_k \big [ s_k \quad H_k y_k \big ] \left[ \begin{array}{cc}
\omega_k y_k^T H_k y_k - \gamma_k (\frac{1}{\omega_k} + y_k^T H_k y_k) & 1 - \omega_k s_k^T y_k \\
1 - \omega_k y_k^T s_k & \omega_k s_k^T H_k^{-1} s_k \end{array} 
\right] \left[ \begin{array}{c}
s_k^T \\
y_k^T H_k \end{array} 
\right] \\
& = \omega_k \big [ s_k \quad H_k y_k \big ] \left[ \begin{array}{cc}
\omega_k y_k^T H_k y_k s_k^T - \gamma_k (\frac{1}{\omega_k} + y_k^T H_k y_k) s_k^T + (1 - \omega_k s_k^T y_k) y_k^T H_k \\
(1 - \omega_k y_k^T s_k) s_k^T + \omega_k s_k^T H_k^{-1} s_k y_k^T H_k \end{array} 
\right]
\end{split} 
\end{equation*}
\normalsize
so $U \det(D) D^{-1} V$ fully expanded becomes
\begin{equation*}
\resizebox{.99\hsize}{!}{$\omega_k \bigg [ s_k \bigg ( \omega_k y_k^T H_k y_k s_k^T - \gamma_k (\frac{1}{\omega_k} + y_k^T H_k y_k) s_k^T + (1 - \omega_k s_k^T y_k) y_k^T H_k \bigg ) + H_k y_k \bigg ( (1 - \omega_k y_k^T s_k) s_k^T + \omega_k s_k^T H_k^{-1} s_k y_k^T H_k \bigg ) \bigg ] . $}
\end{equation*}
\normalsize
This looks rather ugly at the moment, but we continue by breaking the problem down further, noting that
\begin{multline*}
s_k \bigg ( \omega_k y_k^T H_k y_k s_k^T - \gamma_k \bigg ( \frac{1}{\omega_k} + y_k^T H_k y_k \bigg ) s_k^T + (1 - \omega_k s_k^T y_k) y_k^T H_k \bigg ) = \\
\bigg ( (\omega_k - \gamma_k) y_k^T H_k y_k - \frac{\gamma_k}{\omega_k} \bigg ) s_k s_k^T + (1 - \omega_k s_k^T y_k) s_k y_k^T H_k
\end{multline*}
and 
\begin{multline*}
H_k y_k \bigg ( (1 - \omega_k y_k^T s_k) s_k^T + \omega_k s_k^T H_k^{-1} s_k y_k^T H_k \bigg ) = \\ (1 - \omega_k y_k^T s_k) H_k y_k s_k^T + \omega_k H_k y_k (s_k^T H_k^{-1} s_k) y_k^T H_k .
\end{multline*}
The above intermediate results further simplify $U \det(D) D^{-1} V$ to 
\begin{equation*}
\resizebox{.95\hsize}{!}{$\omega_k \bigg [ \bigg ( (\omega_k - \gamma_k) y_k^T H_k y_k - \frac{\gamma_k}{\omega_k} \bigg ) s_k s_k^T + (1 - \omega_k s_k^T y_k) ( s_k y_k^T H_k + H_k y_k s_k^T ) + \omega_k H_k y_k (s_k^T H_k^{-1} s_k) y_k^T H_k \bigg ] . $}
\end{equation*}
Left and right multiplying the line immediately above by $A^{-1} = H_k^{-1}$ gives
\begin{equation*}
\resizebox{.95\hsize}{!}{$\omega_k \bigg [ \bigg ( (\omega_k - \gamma_k) y_k^T H_k y_k - \frac{\gamma_k}{\omega_k} \bigg ) H_k^{-1} s_k s_k^T H_k^{-1} + (1 - \omega_k s_k^T y_k) ( H_k^{-1} s_k y_k^T + y_k s_k^T H_k^{-1} ) + \omega_k y_k (s_k^T H_k^{-1} s_k) y_k^T \bigg ] $}
\end{equation*}
\normalsize
and thus, after dividing out $\det(D)$ and applying $B_{k} = H_{k}^{-1}$, we arrive at the following final formula
\begin{equation} \label{eq:SP-BFGS-Inverse-Update}
\resizebox{.88\hsize}{!}{$B_{k+1} = B_k - \frac{\omega_k \bigg [ \bigg ( (\omega_k - \gamma_k) y_k^T B_k^{-1} y_k - \frac{\gamma_k}{\omega_k} \bigg ) B_k s_k s_k^T B_k + (1 - \omega_k s_k^T y_k) ( B_k s_k y_k^T + y_k s_k^T B_k ) + \omega_k (s_k^T B_k s_k) y_k y_k^T \bigg ]}{\big ( (\omega_k - \gamma_k) y_k^T B_k^{-1} y_k - \frac{\gamma_k}{\omega_k} \big ) \big ( \omega_k s_k^T B_k s_k \big ) - (1 - \omega_k y_k^T s_k)^2} $}
\end{equation}
for the SP-BFGS inverse update, which concludes the proof.

\section{Proof of Theorem~\ref{thm:eigenvalue-bounds}}
\label{app:eigenvalue-bounds-proof}
Referring to Theorem~\ref{thm:sp-bfgs-inverse-update}, taking the trace of both sides of (\ref{eq:SP-BFGS-Inverse-Update}) and applying the linearity and cyclic invariance properties of the trace yields 
\begin{equation}
\Tr(B_{k+1}) = \kappa_1 \Tr(B_k) + \kappa_2 \norm{B_k s_k}_2^2 + 2 \kappa_3 (y_k^T B_k s_k) + \kappa_4 \norm{y_k}_2^2  
\end{equation}
where 
\begin{equation}
\kappa_1 = 1, \quad \kappa_2 = - \frac{\omega_k \hat{D}}{[ \hat{D} (\omega_k s_k^T B_k s_k) - (\hat{E})^2 ]}, \\
\end{equation}
\begin{equation}
\kappa_3 = - \frac{\omega_k \hat{E}}{[ \hat{D} (\omega_k s_k^T B_k s_k) - (\hat{E})^2 ]}, \quad \kappa_4 = - \frac{(\omega_k)^2 s_k^T B_k s_k}{[ \hat{D} (\omega_k s_k^T B_k s_k) - (\hat{E})^2 ]}
\end{equation}
with $\hat{D}$ and $\hat{E}$ defined as
\begin{equation}
\hat{D} = \bigg [ (\omega_k - \gamma_k) (y_k^T B_k^{-1} y_k) - \frac{\gamma_k}{\omega_k} \bigg ], \quad \hat{E} = (1 - \omega_k s_k^T y_k ) = \frac{2 \omega_k}{\beta_k} .
\end{equation}
We now observe that after applying some basic algebra, and recalling that $B_k$ is positive definite, one can deduce that for all $\beta_k \in [0, +\infty]$, the following inequalities hold
\begin{equation}
(\omega_k - \gamma_k) \leq 0, \quad 1 \leq \frac{\gamma_k}{\omega_k}, \quad \hat{D} \leq -1, \quad  0 \leq \frac{2 \omega_k}{\beta_k} \leq 1 .
\end{equation}
By minimizing the absolute value of the common denominator in $\kappa_2, \kappa_3$, and $\kappa_4$ using the inequalities above, we can obtain the bounds
\begin{equation}
- \frac{1}{s_k^T B_k s_k} \leq \kappa_2 \leq 0, \qquad 0 \leq \kappa_4 \leq \omega_k \leq \gamma_k 
\end{equation}
\begin{equation}
0 \leq \kappa_3 \leq \frac{2 \omega_k}{\beta_k} \frac{1}{s_k^T B_k s_k + \frac{2 \omega_k}{\beta_k} \frac{2}{\beta_k}} \leq \frac{\beta_k}{2} . 
\end{equation}
As a result, 
\begin{align}
\Tr(B_{k+1}) & \leq \Tr(B_k) + 2 \kappa_3 | y_k^T B_k s_k | + \kappa_4 \norm{y_k}_2^2 \\
 & \label{eq:alg-B_k-bound-2} \leq \Tr(B_k) + \beta_k \norm{y_k}_2 \lambda_{max}(B_k) \norm{s_k}_2 + \gamma_k \norm{y_k}_2^2 
\end{align}
and applying $\lambda_{max}(B_k) \leq \Tr(B_k)$ establishes (\ref{eq:B_k-upper-bound}). Similarly, referring to (\ref{eq:sp-bfgs-expanded}) reveals the upper bound
\begin{equation}
\Tr(H_{k+1}) \leq \Tr(H_k) + 2 \omega_k | y_k^T H_k s_k | + \bigg [ \gamma_k + \omega_k \gamma_k (y_k^T H_k y_k) \bigg ] \norm{s_k}_2^2 .
\end{equation}
To establish (\ref{eq:H_k-upper-bound}), we apply $\lambda_{max}(H_k) \leq \Tr(H_k)$ and $\omega_k \leq \gamma_k$ to the line above, and then factor. This completes the proof.

\section{Proof of Lemma~\ref{lemma:noise-dominated-region}}
\label{app:noise-dominated-region-proof}
The angle condition $\nabla \phi(x)^T H g(x) > 0$ expands to
\begin{equation}
\nabla \phi(x)^T H g(x) = \nabla \phi(x)^T H \nabla \phi(x) + \nabla \phi(x)^T H e(x) > 0 
\end{equation}
and by applying the Cauchy-Schwarz inequality and Assumption~\ref{assump:gradient-noise-bound}, we see that if 
\begin{equation}
\psi \norm{\nabla \phi(x)}_2^2 > \Psi \norm{\nabla \phi(x)}_2 \bar{\epsilon}_g 
\end{equation}
then $\nabla \phi(x)^T H g(x) > 0$. Contrapositively, if $\nabla \phi(x)^T H g(x) \leq 0$ then
\begin{equation} \label{eq:noise-region-gradient}
\norm{\nabla \phi(x)}_2 \leq \frac{\Psi \bar{\epsilon}_g}{\psi}  .
\end{equation}
As $\phi$ is m-strongly convex due to Assumption~\ref{assump:strong-convexity}, we have
\begin{equation} \label{eq:PL-inequality}
\phi^{\star} \geq \phi(x) + \min_{v} \bigg \{ \nabla \phi(x)^T v + \frac{m}{2} \norm{v}_2^2 \bigg \} = \phi(x) - \frac{1}{2 m} \norm{\nabla \phi(x)}_2^2 .
\end{equation}
Squaring (\ref{eq:noise-region-gradient}) and then combining it with (\ref{eq:PL-inequality}) gives $\mathcal{N}_1(\psi,\Psi)$, completing the proof.

\section{Proof of Theorem~\ref{thm:fixed-alpha-linear-convergence}}
\label{app:fixed-alpha-linear-convergence-proof}
As $\phi \in C^2$ by Assumption~\ref{assump:strong-convexity}, applying Taylor's theorem and using (\ref{eq:fixed-alpha-iteration}) and strong convexity gives
\begin{align*}
\phi_{k+1} & = \phi_k + \nabla \phi_k^T [ x_{k+1} - x_{k} ] + \frac{1}{2} [ x_{k+1} - x_{k} ]^T \nabla^2 \phi(u) [ x_{k+1} - x_{k} ] \\
 & \leq \phi_k - \alpha \nabla \phi_k^T H_k g_k + \frac{\alpha^2 M}{2} \norm{H_k g_k}_2^2 
\end{align*}
where $u$ is some convex combination of $x_{k+1}$ and $x_{k}$. Proceeding, note that the smallest $\mathcal{N}_{1}$ from Lemma~\ref{lemma:noise-dominated-region} occurs when $\psi = \Psi$, and in this case $\nabla \phi_k^T g_k > 0$ if $x_k \notin \mathcal{N}_{1}$. Hence, for all possible choices of $\mathcal{N}_{1}$ it is true that $\nabla \phi_k^T g_k > 0$ if $x_k \notin \mathcal{N}_{1}$. Combining this with (\ref{eq:inner-product-proportional}) gives 
\begin{equation} \label{eq:inner-product-proportional-lower-bounded}
\nabla \phi_k^T H_k g_k \geq \psi \nabla \phi_k^T g_k > 0
\end{equation}
if $x_k \notin \mathcal{N}_{1}$. With (\ref{eq:inner-product-proportional-lower-bounded}) in hand, continuing to bound terms gives
\small
\begin{align*}
\phi_{k+1} & \leq \phi_k - \alpha \psi \nabla \phi_k^T [ \nabla \phi_k + e_k ] + \frac{\alpha^2 \Psi^2 M}{2} \norm{\nabla \phi_k + e_k}_2^2 \\
 & = \phi_k - \alpha \Psi \bigg ( \frac{\psi}{\Psi} - \frac{\alpha \Psi M}{2} \bigg ) \norm{\nabla \phi_k}_2^2 - \alpha \Psi \bigg ( \frac{\psi}{\Psi} - \alpha \Psi M \bigg ) \nabla \phi_k^T e_k + \frac{\alpha^2 \Psi^2 M}{2} \norm{e_k}_2^2 \\
 & \leq \phi_k - \alpha \Psi \bigg ( \frac{\psi}{\Psi} - \frac{\alpha \Psi M}{2} \bigg ) \norm{\nabla \phi_k}_2^2 + \alpha \Psi \bigg ( \frac{\psi}{\Psi} - \alpha \Psi M \bigg ) \norm{\nabla \phi_k}_2 \norm{e_k}_2 + \frac{\alpha^2 \Psi^2 M}{2} \norm{e_k}_2^2 \\
& \leq \phi_k - \alpha \Psi \bigg ( \frac{\psi}{\Psi} - \frac{\alpha \Psi M}{2} \bigg ) \norm{\nabla \phi_k}_2^2 + \alpha \Psi \bigg ( \frac{\psi}{\Psi} - \alpha \Psi M \bigg ) \bigg [ \frac{1}{2} \norm{\nabla \phi_k}_2^2 + \frac{1}{2} \norm{e_k}_2^2 \bigg ] \\
& \qquad + \frac{\alpha^2 \Psi^2 M}{2} \norm{e_k}_2^2 
\end{align*}
\normalsize
where the last inequality follows from expanding 
\small
\begin{equation}
0 \leq \bigg ( \frac{1}{\sqrt{2}} \norm{\nabla \phi_k}_2 - \frac{1}{\sqrt{2}} \norm{e_k}_2 \bigg )^2 = \frac{1}{2} \norm{\nabla \phi_k}_2^2 - \norm{\nabla \phi_k}_2 \norm{e_k}_2 + \frac{1}{2} \norm{e_k}_2^2
\end{equation}
\normalsize
and using (\ref{eq:alpha-fixed-size-lower-upper-bound}). Simplifying the last inequality reveals that 
\begin{equation} \label{eq:simplified-worst-case-descent-bound}
\phi_{k+1} \leq \phi_k - \frac{\alpha \psi}{2} \norm{\nabla \phi_k}_2^2 + \frac{\alpha \psi}{2} \norm{e_k}_2^2 .
\end{equation} 
Since $\phi$ is m-strongly convex by Assumption~\ref{assump:strong-convexity}, we can apply 
\begin{equation} \label{eq:PL-inequality-shortscript}
\norm{\nabla \phi_k}_2^2 \geq 2 m ( \phi_k - \phi^{\star} )
\end{equation}
as shown in the proof of Lemma~\ref{lemma:noise-dominated-region} (see Appendix~\ref{app:noise-dominated-region-proof}), which combined with (\ref{eq:simplified-worst-case-descent-bound}) and Assumption~\ref{assump:gradient-noise-bound} gives
\begin{equation}
\phi_{k+1} \leq \phi_k - \alpha \psi m ( \phi_k - \phi^{\star} ) + \frac{\alpha \psi}{2} \bigg ( \frac{\Psi \bar{\epsilon}_g}{\psi} \bigg )^2 . 
\end{equation}
Subtracting $\phi^{\star}$ from both sides, we get
\begin{equation}
\phi_{k+1} - \phi^{\star} \leq (1 - \alpha \psi m) (\phi_k - \phi^{\star} ) + \frac{\alpha \psi}{2} \bigg ( \frac{\Psi \bar{\epsilon}_g}{\psi} \bigg )^2 
\end{equation}
which, by subtracting $\frac{1}{2 m} (\frac{\Psi \bar{\epsilon}_g}{\psi})^2$ from both sides and simplifying, gives 
\small
\begin{align*}
\phi_{k+1} - \phi^{\star} - \frac{1}{2 m} \bigg ( \frac{\Psi \bar{\epsilon}_g}{\psi} \bigg )^2 & \leq (1 - \alpha \psi m) (\phi_k - \phi^{\star} ) + \frac{\alpha \psi}{2} \bigg ( \frac{\Psi \bar{\epsilon}_g}{\psi} \bigg )^2 - \frac{1}{2 m} \bigg ( \frac{\Psi \bar{\epsilon}_g}{\psi} \bigg )^2 \\
& = (1 - \alpha \psi m) (\phi_k - \phi^{\star} ) + (\alpha \psi m - 1) \frac{1}{2 m} \bigg ( \frac{\Psi \bar{\epsilon}_g}{\psi} \bigg )^2 \\
& = (1 - \alpha \psi m) \bigg ( \phi_k - \bigg [ \phi^{\star} + \frac{1}{2 m} \bigg ( \frac{\Psi \bar{\epsilon}_g}{\psi} \bigg )^2 \bigg ] \bigg ) 
\end{align*}
\normalsize
thus establishing the Q-linear result (\ref{eq:fixed-alpha-Q-linear-rate}). We obtain (\ref{eq:fixed-alpha-R-linear-rate}) by recursively applying the worst case bound in (\ref{eq:fixed-alpha-Q-linear-rate}), noting that in the worst case if $x_0 \notin \mathcal{N}_{1}$, then the sequence of iterates $\{ x_k \}$ remains outside of $\mathcal{N}_{1}$, only approaching $\mathcal{N}_{1}$ in the limit $k \rightarrow \infty$.

\section{Extended Numerical Experiments}
\label{app:extended-numerical-experiments}
Tables~\ref{tab:SPBFGS-alg-comp-grad-noise-only} and~\ref{tab:BFGS-alg-comp-grad-noise-only} compare the performance of SP-BFGS vs. BFGS on the $32$ CUTEst test problems with only gradient noise present (i.e. $\bar{\epsilon}_f = 0$). Gradient noise was generated using $\bar{\epsilon}_g = 10^{-4} \norm{\nabla \phi(x^0)}_2$, where the starting point $x^0$ varies by CUTEst problem, to ensure that noise does not initially dominate gradient evaluations. Overall, SP-BFGS outperforms BFGS on approximately $80 \%$ of the CUTEst problems with only gradient noise present, and performs at least as good as BFGS on approximately $95 \%$ of these problems.

\newpage
\begin{table}[h]
\begin{center}
  {\def\arraystretch{1.5}\tabcolsep=3pt
  \begin{tabular}{ | c | c | c | c | c | c | c | }
    \hline
        \multicolumn{7}{ | c | }{ \textbf{SP-BFGS With Gradient Noise Only}} \\
    \hline
    \textit{Problem} & \textit{Dim.} & \textit{Mean($\Delta_{opt}$)} & \textit{Median($\Delta_{opt}$)} & \textit{Min($\Delta_{opt}$)} & \textit{Max($\Delta_{opt}$)} & \textit{$s^2(\Delta_{opt})$} \\ 
    \hline
    ARGTRGLS & 200 & -9.6E-02 & -9.6E-02 & -1.0E-01 & -8.5E-02 & 1.9E-05 \\
    \hline
    ARWHEAD & 500 & -2.8E+00 & -2.8E+00 & -2.8E+00 & -2.7E+00 & 1.7E-03 \\
    \hline
    BEALE & 2 & -1.4E+01 & -1.4E+01 & -1.6E+01 & -7.0E+00 & 4.1E+00 \\
    \hline
    BOX3 & 3 & -6.7E+00 & -6.5E+00 & -1.1E+01 & -6.3E+00 & 6.3E-01 \\
    \hline
    BOXPOWER & 100 & -2.7E+00 & -2.7E+00 & -3.1E+00 & -2.3E+00 & 4.6E-02 \\
    \hline
	BROWNBS & 2 & -4.5E+00 & -5.9E+00 & -8.0E+00 & 1.1E+00 & 8.4E+00 \\
	\hline
	BROYDNBDLS & 50 & -5.4E+00 & -5.4E+00 & -5.9E+00 & -5.0E+00 & 3.4E-02 \\
	\hline
	CHAINWOO & 100 & 1.6E+00 & 1.7E+00 & 7.6E-02 & 2.1E+00 & 1.5E-01 \\
	\hline
	CHNROSNB & 50 & -3.2E+00 & -3.0E+00 & -4.9E+00 & -2.6E+00 & 4.5E-01 \\
	\hline
	COATING & 134 & 3.4E-01 & 3.4E-01 & 1.8E-01 & 4.2E-01 & 3.1E-03 \\
	\hline
	COOLHANSLS & 9 & -9.4E-01 & -9.4E-01 & -1.2E+00 & -4.8E-01 & 4.2E-02 \\
	\hline
	CUBE & 2 & -2.7E+00 & -2.5E+00 & -5.8E+00 & -1.7E+00 & 7.5E-01 \\
	\hline
	CYCLOOCFLS & 20 & -7.4E+00 & -7.2E+00 & -9.3E+00 & -5.9E+00 & 8.1E-01 \\
	\hline
	EXTROSNB & 10 & -5.1E+00 & -5.2E+00 & -5.3E+00 & -4.7E+00 & 3.0E-02 \\
	\hline
	FMINSRF2 & 64 & -8.6E+00 & -8.7E+00 & -8.8E+00 & -8.1E+00 & 3.4E-02 \\
	\hline
	GENHUMPS & 5 & -2.7E+00 & -2.6E+00 & -5.2E+00 & -1.0E+00 & 1.1E+00 \\
	\hline
	GENROSE & 5 & -1.2E+01 & -1.2E+01 & -1.4E+01 & -8.9E+00 & 2.0E+00 \\
	\hline
	HEART6LS & 6 & 1.0E+00 & 1.2E+00 & -1.8E+00 & 1.2E+00 & 5.0E-01 \\
	\hline
	HELIX & 3 & -5.7E+00 & -5.9E+00 & -8.7E+00 & -3.4E+00 & 1.4E+00 \\
	\hline
	MANCINO & 30 & -1.0E+00 & -1.0E+00 & -1.4E+00 & -7.0E-01 & 3.7E-02 \\
	\hline
	METHANB8LS & 31 & -3.6E+00 & -3.6E+00 & -4.0E+00 & -3.3E+00 & 3.1E-02 \\
	\hline
	MODBEALE & 200 & 1.2E+00 & 1.2E+00 & 3.8E-01 & 1.9E+00 & 1.8E-01 \\
	\hline
	NONDIA & 10 & -3.5E-03 & -3.6E-03 & -4.3E-03 & -1.1E-03 & 6.6E-07 \\
	\hline
	POWELLSG & 4 & -5.7E+00 & -5.3E+00 & -9.3E+00 & -4.0E+00 & 1.6E+00 \\
	\hline
	POWER & 10 & -3.5E+00 & -3.5E+00 & -4.4E+00 & -2.8E+00 & 1.3E-01 \\
	\hline
	ROSENBR & 2 & -1.1E+01 & -1.2E+01 & -1.4E+01 & -5.1E+00 & 4.4E+00 \\
	\hline
	ROSENBRTU & 2 & -1.9E+01 & -1.9E+01 & -2.2E+01 & -1.7E+01 & 1.1E+00 \\
	\hline
	SBRYBND & 500 & 3.9E+00 & 3.9E+00 & 3.9E+00 & 3.9E+00 & 2.0E-05 \\
	\hline
	SINEVAL & 2 & -1.3E+01 & -1.3E+01 & -1.8E+01 & -1.1E+01 & 3.3E+00 \\
	\hline
	SNAIL & 2 & -1.5E+01 & -1.6E+01 & -1.8E+01 & -1.2E+01 & 1.6E+00 \\
	\hline
	SROSENBR & 1000 & -9.7E-01 & -9.7E-01 & -1.3E+00 & -4.8E-01 & 3.2E-02 \\
	\hline
	VIBRBEAM & 8 & 1.6E+00 & 1.6E+00 & 1.2E+00 & 2.8E+00 & 9.1E-02 \\
	\hline
	\end{tabular}
  }
\end{center}
\caption{Performance of SP-BFGS on $32$ selected CUTEst test problems with noise added to gradient evaluations only (i.e. $\bar{\epsilon}_f = 0$). The number of objective function evaluations is fixed at $2000$. $\Delta_{opt} \coloneqq \log_{10}(\phi_{best} - \phi^{\star})$ measures the optimality gap, where $\phi_{best}$ denotes the smallest value of the true function $\phi$ measured at any point during an algorithm run. Statistics are calculated from a sample of $30$ runs per algorithm, and the \textit{Dim.} column gives the problem dimension. The SPBFGS penalty parameter was set as $\beta_k = \frac{10^8}{\bar{\epsilon}_g} \norm{s_k}_2 + 10^{-10}$. For each problem, gradient noise was generated using $\bar{\epsilon}_g = 10^{-4} \norm{\nabla \phi(x^0)}_2$, where the starting point $x^0$ varies by CUTEst problem.  }
\label{tab:SPBFGS-alg-comp-grad-noise-only}
\end{table}

\newpage
\begin{table}[h]
\begin{center}
  {\def\arraystretch{1.5}\tabcolsep=3pt
  \begin{tabular}{ | c | c | c | c | c | c | c | }
    \hline
        \multicolumn{7}{ | c | }{ \textbf{BFGS With Gradient Noise Only}} \\
    \hline
    \textit{Problem} & \textit{Dim.} & \textit{Mean($\Delta_{opt}$)} & \textit{Median($\Delta_{opt}$)} & \textit{Min($\Delta_{opt}$)} & \textit{Max($\Delta_{opt}$)} & \textit{$s^2(\Delta_{opt})$} \\ 
    \hline
    ARGTRGLS & 200 & -9.2E-02 & -9.3E-02 & -9.9E-02 & -8.2E-02 & 1.7E-05 \\
    \hline
    ARWHEAD & 500 & -2.5E+00 & -2.5E+00 & -2.6E+00 & -2.5E+00 & 7.6E-04 \\
    \hline
    BEALE & 2 & -8.3E+00 & -8.5E+00 & -1.2E+01 & -5.8E+00 & 3.9E+00 \\
    \hline
    BOX3 & 3 & -6.4E+00 & -6.4E+00 & -6.6E+00 & -6.3E+00 & 2.3E-03 \\
    \hline
    BOXPOWER & 100 & -2.8E+00 & -2.8E+00 & -3.3E+00 & -2.4E+00 & 6.1E-02 \\
	\hline
	BROWNBS & 2 & 6.8E-02 & 1.0E+00 & -8.2E+00 & 3.6E+00 & 1.0E+01 \\
	\hline
	BROYDNBDLS & 50 & -5.1E+00 & -5.1E+00 & -5.3E+00 & -4.9E+00 & 1.5E-02 \\
	\hline
	CHAINWOO & 100 & 1.7E+00 & 1.8E+00 & 1.1E+00 & 2.2E+00 & 5.8E-02 \\
	\hline
	CHNROSNB & 50 & -2.9E+00 & -2.7E+00 & -4.5E+00 & -2.1E+00 & 3.8E-01 \\
	\hline
	COATING & 134 & 3.6E-01 & 3.7E-01 & 2.1E-01 & 4.2E-01 & 2.6E-03 \\
	\hline
	COOLHANSLS & 9 & -5.6E-01 & -6.4E-01 & -1.3E+00 & 1.9E-01 & 1.9E-01 \\
	\hline
	CUBE & 2 & -1.1E+00 & -1.1E+00 & -1.8E+00 & -9.6E-01 & 5.6E-02 \\
	\hline
	CYCLOOCFLS & 20 & -6.5E+00 & -6.5E+00 & -8.3E+00 & -5.1E+00 & 5.4E-01 \\
	\hline
	EXTROSNB & 10 & -5.1E+00 & -5.1E+00 & -5.3E+00 & -4.9E+00 & 8.1E-03 \\
	\hline
	FMINSRF2 & 64 & -8.2E+00 & -8.2E+00 & -8.7E+00 & -7.3E+00 & 1.4E-01 \\
	\hline
	GENHUMPS & 5 & -1.5E+00 & -1.2E+00 & -4.0E+00 & -2.8E-01 & 8.4E-01 \\
	\hline
	GENROSE & 5 & -6.8E+00 & -6.7E+00 & -8.7E+00 & -5.9E+00 & 4.8E-01 \\
	\hline
	HEART6LS & 6 & 1.2E+00 & 1.2E+00 & 1.2E+00 & 1.2E+00 & 1.9E-04 \\
	\hline
	HELIX & 3 & -4.8E+00 & -4.6E+00 & -8.1E+00 & -2.6E+00 & 2.1E+00 \\
	\hline
	MANCINO & 30 & -8.3E-01 & -8.8E-01 & -1.2E+00 & -3.3E-01 & 5.3E-02 \\
	\hline
	METHANB8LS & 31 & -3.5E+00 & -3.4E+00 & -3.9E+00 & -3.3E+00 & 2.8E-02 \\
	\hline
	MODBEALE & 200 & 1.0E+00 & 1.1E+00 & -6.2E-01 & 2.1E+00 & 3.6E-01 \\
	\hline
	NONDIA & 10 & 1.2E-03 & 1.3E-03 & -4.4E-03 & 1.3E-02 & 2.2E-05 \\
	\hline
	POWELLSG & 4 & -5.3E+00 & -5.0E+00 & -8.0E+00 & -3.6E+00 & 1.6E+00 \\
	\hline
	POWER & 10 & -3.4E+00 & -3.4E+00 & -4.3E+00 & -2.8E+00 & 1.4E-01 \\
	\hline
	ROSENBR & 2 & -6.1E+00 & -5.9E+00 & -1.0E+01 & -3.7E+00 & 2.9E+00 \\
	\hline
	ROSENBRTU & 2 & -1.5E+01 & -1.5E+01 & -1.8E+01 & -1.4E+01 & 1.6E+00 \\
	\hline
	SBRYBND & 500 & 3.9E+00 & 3.9E+00 & 3.9E+00 & 3.9E+00 & 2.0E-05 \\
	\hline
	SINEVAL & 2 & -1.2E+01 & -1.3E+01 & -1.7E+01 & -8.5E+00 & 4.0E+00 \\
	\hline
	SNAIL & 2 & -1.1E+01 & -1.1E+01 & -1.6E+01 & -8.2E+00 & 3.5E+00 \\
	\hline
	SROSENBR & 1000 & -9.1E-01 & -8.8E-01 & -1.3E+00 & -5.1E-01 & 3.1E-02 \\
	\hline
	VIBRBEAM & 8 & 1.7E+00 & 1.6E+00 & 1.4E+00 & 2.6E+00 & 1.0E-01 \\
	\hline
	\end{tabular}
  }
\end{center}
\caption{Performance of BFGS on $32$ selected CUTEst test problems with noise added to gradient evaluations only (i.e. $\bar{\epsilon}_f = 0$). The number of objective function evaluations is fixed at $2000$. $\Delta_{opt} \coloneqq \log_{10}(\phi_{best} - \phi^{\star})$ measures the optimality gap, where $\phi_{best}$ denotes the smallest value of the true function $\phi$ measured at any point during an algorithm run. Statistics are calculated from a sample of $30$ runs per algorithm, and the \textit{Dim.} column gives the problem dimension. For each problem, gradient noise was generated using $\bar{\epsilon}_g = 10^{-4} \norm{\nabla \phi(x^0)}_2$, where the starting point $x^0$ varies by CUTEst problem. }
\label{tab:BFGS-alg-comp-grad-noise-only}
\end{table}

\end{document}